\newtheorem{theorem}{Theorem}[section]
\theoremstyle{plain}
\newtheorem{lemma}[theorem]{Lemma}
\newtheorem{obs}[theorem]{Observation}
\theoremstyle{definition}
\begin{document}

%--------------------------------------------------------------------------------------------------

\title{The Generation of Fullerenes}

\author{Gunnar Brinkmann\\ \footnote{Gunnar.Brinkmann@UGent.be}
Applied Mathematics \& Computer Science\\
  Ghent University\\
Krijgslaan 281-S9, \\9000 Ghent, Belgium\\
\and
Jan Goedgebeur\\
\footnote{Jan.Goedgebeur@UGent.be}
Applied Mathematics \& Computer Science\\
  Ghent University\\
Krijgslaan 281-S9, \\9000 Ghent, Belgium\\
\and
Brendan D. McKay\\
\footnote{bdm@cs.anu.edu.au}
Research School of Computer Science\\  
Australian National University\\  
ACT 0200, Australia 
}

\date{} % delete this line to display the current date
\maketitle

\begin{abstract}

We describe an efficient new algorithm for the generation of fullerenes. 
Our implementation of this algorithm is
more than $3.5$ times faster than the previously fastest generator
for fullerenes -- {\em fullgen} --  
and the first program since {\em fullgen} to be useful for more 
than 100 vertices.
We also note a programming error in {\em fullgen} that caused problems for 136 or more vertices.
We tabulate the numbers of fullerenes and IPR fullerenes up to 400 vertices.
We also check up to 316 vertices
a conjecture of Barnette that cubic planar graphs with maximum face
size 6 are hamiltonian and verify that the smallest counterexample to the spiral conjecture 
has 380 vertices. \\

Note: this is the unedited version of our paper which was submitted and subsequently accepted for publication in \textit{Journal of Chemical Information and Modeling}. The final edited and published version can be accessed at \url{http://dx.doi.org/10.1021/ci3003107}
\end{abstract}

%--------------------------------------------------------------------------------------------------
\section{Introduction}
Fullerenes are spherical carbon molecules that can be modelled
as 
cubic plane graphs where all faces are pentagons or
hexagons. We will refer to these mathematical models also
as fullerenes. Euler's formula implies that a
fullerene with $n$ vertices contains exactly 12 pentagons and $n/2 - 10$ hexagons. 
The
\textit{dual} of a fullerene is the plane graph
obtained by exchanging the roles of vertices and faces: the vertex set of the dual graph
is the set of faces of the original graph and two 
vertices in the dual graph are adjacent if and only if the two faces share an edge
in the original graph. The rotational order around the vertices in
the embedding
of the dual fullerene  follows the rotational order of the faces.
As fullerenes and their duals are 3-connected, due to the theorem of Whitney the plane embeddings of fullerenes
and duals of fullerenes are uniquely determined and the concept of graph isomorphism
and isomorphism of embedded graphs (treating the mirror image as equivalent)
coincide. The dual
of a fullerene with $n$ vertices is a triangulation (i.e. every face is a
triangle) which contains 12 vertices with degree 5 and $n/2
- 10$ vertices with degree 6.

Isolated Pentagon Rule (IPR) fullerenes are fullerenes where no two
pentagons share an edge. IPR fullerenes are especially interesting 
due to a general tendency to be chemically more stable and thus more likely to occur
in nature.

The first fullerene molecule was discovered in 1985 by Kroto et
al.~\cite{kroto_85}, namely the famous $C_{60}$
buckminsterfullerene or ``buckyball''. After that discovery several
attempts have been made to generate complete lists of fullerene isomers.

The first approach was the spiral algorithm given by
Manolopoulos et al. in 1991~\cite{manolopoulos_91}. This algorithm
was relatively inefficient and also
incomplete in the sense that not every fullerene isomer could be
generated with it. Manolopoulos and Fowler~\cite{no_spi}
gave an example of a fullerene that can not be constructed by this algorithm.
The algorithm described here was the first to prove that the 
counterexample given by Manolopoulos and Fowler~\cite{no_spi} is in fact smallest possible~\cite{min_380}.

The spiral algorithm was later modified to make it complete, but the
resulting algorithm was not efficient~\cite{manolopoulos_92}. In 1995 Yoshida and
Osawa~\cite{yoshida_95} proposed a different algorithm using folding
nets, but its completeness has not been proven.

Other methods are described by Liu et al.~\cite{LKSS91} and Sah~\cite{Sah_93},
but they also didn't lead to sufficiently efficient algorithms.

The most successful approach until now dates from 1997 and is given 
by Brinkmann and Dress~\cite{brinkmann_97}.
The algorithm described there is proven to be complete and has been
implemented in a program called \textit{fullgen}.
The basic strategy can be described as stitching together patches
which are bounded by zigzag (Petrie) paths. Unfortunately a simple
typo-like mistake in the source code produced an error that occurred for the
first time at 136 vertices -- far too many vertices to be detectable 
by any of the
other programs until now. Due to this error the lists in the article of Brinkmann and Dress~\cite{brinkmann_97}
contain some incorrect numbers which we will correct here.

The method of patch replacement can be described as replacing a finite connected region inside some
fullerene with a larger patch with identical boundary. For energetical reasons,
patch replacement as a chemical mechanism to grow fullerenes 
would need very small 
patches. Brinkmann et al.~\cite{brinkmann_06} investigated replacements of small patches and introduced two infinite families of operations. These operations 
can generate all fullerenes up to at
least 200 vertices, but -- as already shown in their paper -- fail in general. 
In 2008
Hasheminezhad, Fleischner and McKay~\cite{hasheminezhad_08} described
a recursive structure using patch replacements for the class of all fullerenes.

In section~\ref{section:generation_nonipr} of this paper we will
describe an algorithm for the efficient generation of all
non-isomorphic fullerenes using the construction operations
from Hasheminezhad et al.~\cite{hasheminezhad_08}. In section~\ref{section:generation_ipr_filter} we will show how to extend this algorithm to generate only IPR fullerenes by using some simple
look-aheads. 

%--------------------------------------------------------------------------------------------------

\section{Generation of fullerenes} \label{section:generation_nonipr}
\subsection{The construction algorithm}
We call the patch replacement operations which replace a connected fragment of a fullerene
by a larger fragment \textit{expansions} and the inverse
operations \textit{reductions}. 
If $G'$ is obtained from $G$ by an expansion, we call $G'$ the child of $G$ and $G$ the parent of $G'$.

From the results of Brinkmann et al.~\cite{BGJ06}
it follows that no finite set of patch replacement operations is sufficient to construct 
all fullerenes from smaller ones. So each recursive structure based
on patch replacement operations must necessarily
allow an infinite number of different expansion types.

Hasheminezhad et al.~\cite{hasheminezhad_08} used two infinite families of expansions: $L_i$ and $B_{i,j}$ and a
single expansion $F$. These expansions are sketched in 
Figure~\ref{fig:fullerene_operations}. The lengths of the paths
between the pentagons may vary and for operation $L_i$ 
the mirror image must also be considered. 
All faces drawn completely in
the figure or labelled $f_k$ or $g_k$ have to be distinct. The
faces labelled $f_k$ or $g_k$ can be either pentagons or
hexagons, but when we refer to {\em the} pentagons of the operation, we always
mean the two faces drawn as pentagons. For more details on the expansions
see the article of Hasheminezhad et al.~\cite{hasheminezhad_08}.

\begin{figure}[h!t]
	\centering
	\includegraphics[width=1.0\textwidth]{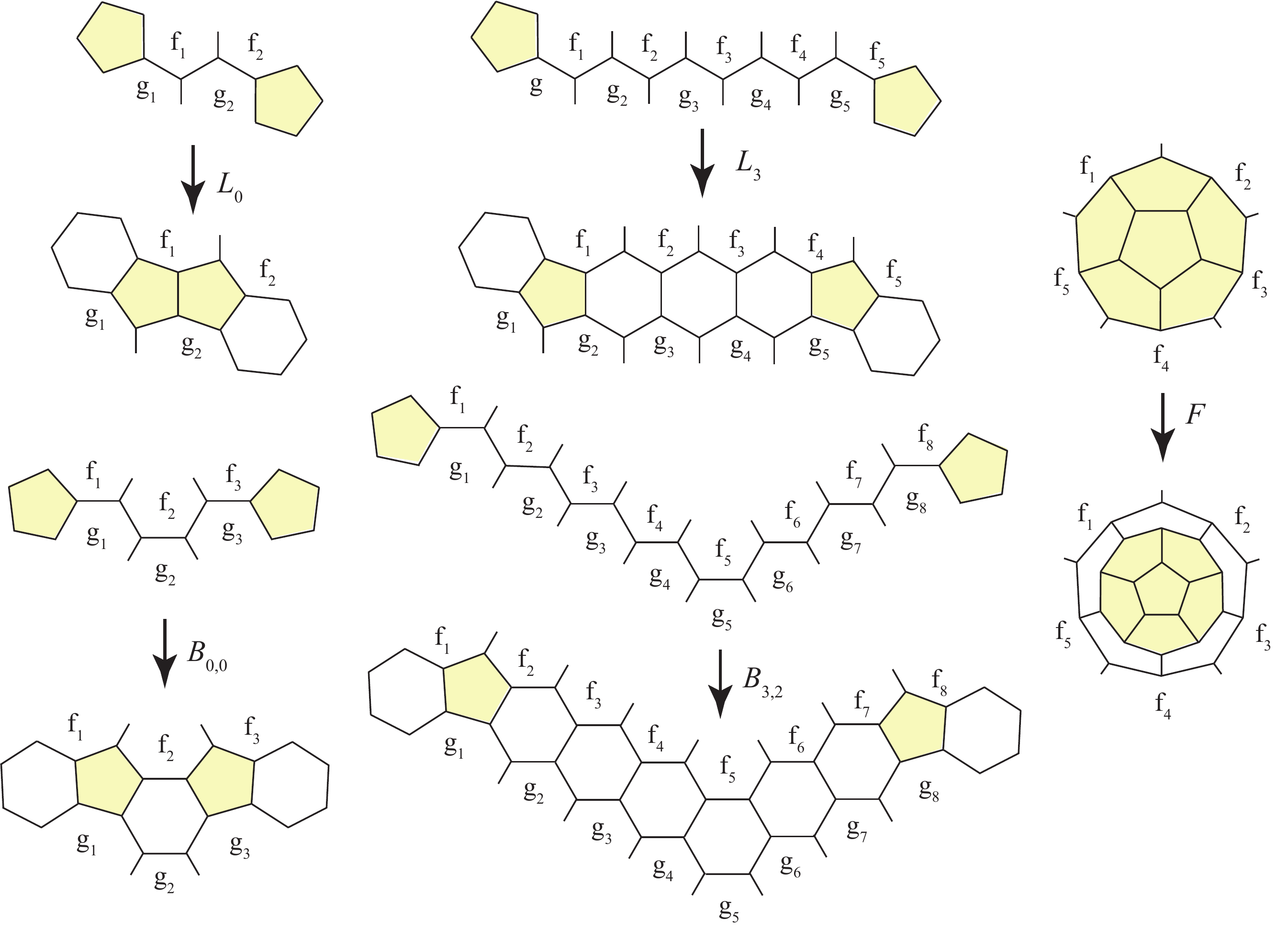}
	\caption{The $L$, $B$ and $F$ expansions for fullerenes.}
	\label{fig:fullerene_operations}
\end{figure}

In Figure~\ref{fig:fullerene_operations-dual} the $L$ and $B$
expansions of
Figure~\ref{fig:fullerene_operations} are shown in dual
representation. We will refer to vertices which have degree $k \in
\{5,6\}$ in the dual representation of a fullerene as $k$-vertices. The
solid white vertices in the figure are 5-vertices, the solid black
vertices are 6-vertices and the shaded vertices can be
either. 

%When we speak about fullerenes in the remainder of this paper, we
%always mean fullerenes in dual representation unless specified
%otherwise.

\begin{figure}[h!t]
	\centering
	%\resizebox{13cm}{!}{\input{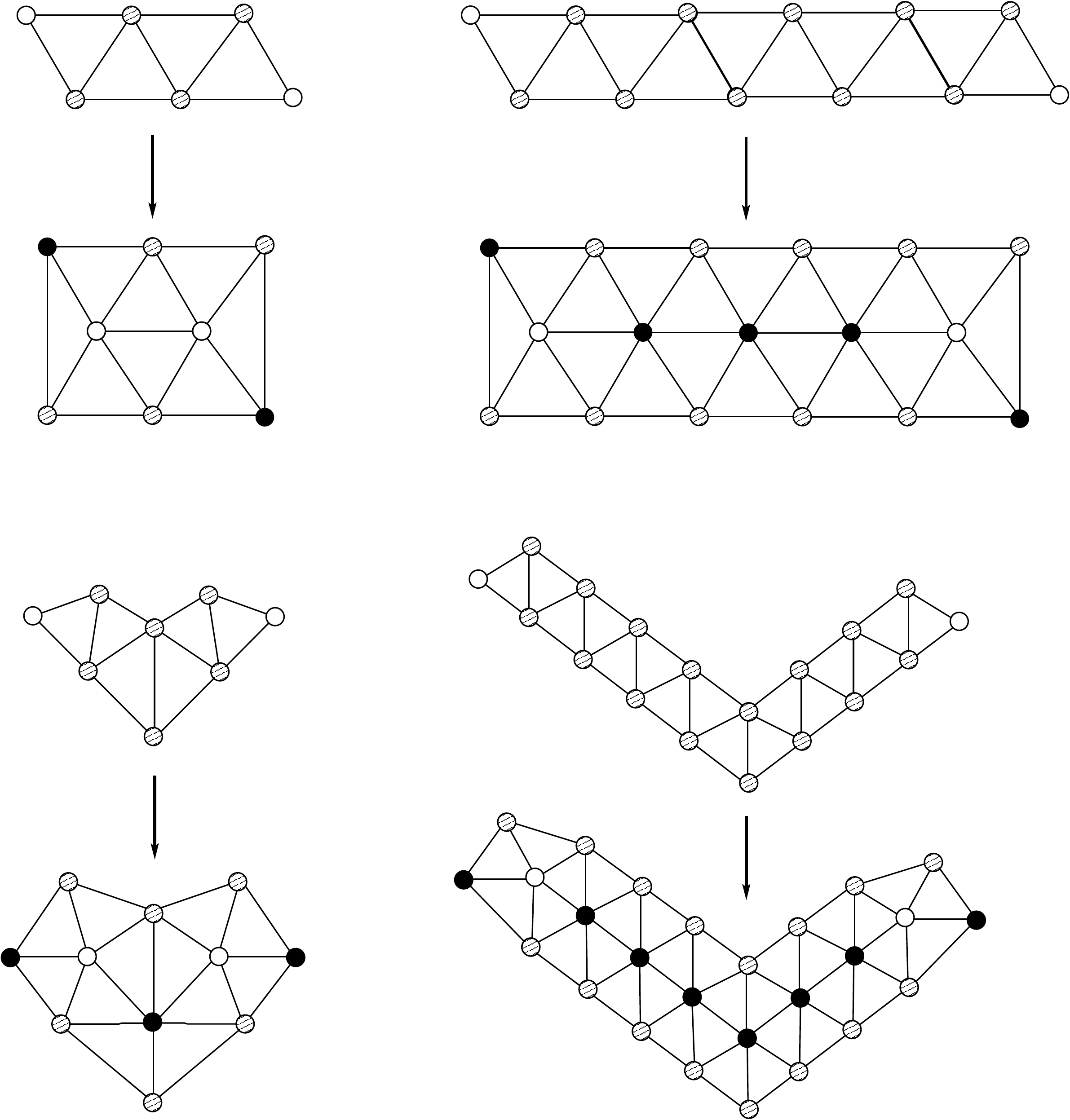_t}}
	\resizebox{0.99\textwidth}{!}{\input{figures/reductions.pdf_t}}
	\caption{The $L$ and $B$ expansions in dual representation. }
	\label{fig:fullerene_operations-dual}
\end{figure}

Three special fullerenes $C_{20}$ (the dodecahedron),
$C_{28}(T_d)$ and $C_{30}(D_{5h})$ are shown in 
Figure~\ref{fig:irred_fullerenes}.  The type-(5,0) nanotube fullerenes
are those which can be made from $C_{30}(D_{5h})$ by applying
expansion~$F$ zero or more times. 
We will refer to all fullerenes not in one of these classes as
\textit{reducible}.  
The following theorem proved by Hasheminezhad et al.~\cite{hasheminezhad_08} shows
that all reducible fullerenes can be reduced using a type $L$ or $B$
reduction.

\begin{theorem} \label{theorem:construction}
Every fullerene isomer, except $C_{28}(T_d)$ and type-(5,0) nanotube
fullerenes can be constructed by recursively applying expansions of
type $L$ and $B$ to $C_{20}$.
\end{theorem}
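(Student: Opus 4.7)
The plan is to prove the theorem by induction on the number of vertices $n$. The base case is $n=20$, which is $C_{20}$ itself and needs no reduction. For the inductive step it suffices to show that every fullerene $G$ with $n>20$ vertices that is neither $C_{28}(T_d)$ nor a type-(5,0) nanotube admits an $L$- or $B$-reduction to a smaller fullerene $G'$ that is again not an exception; then by the inductive hypothesis $G'$ can be built from $C_{20}$ using $L$- and $B$-expansions, and so can $G$. To keep the search for reductions geometric, I would work throughout in the dual representation, where the objects are triangulations with exactly twelve 5-vertices and $n/2-10$ 6-vertices; the applicability of an $L_i$ or $B_{i,j}$ reduction is then a purely local condition on a pair of 5-vertices together with the ``straight'' paths of 6-vertices between them, readable off from Figure~\ref{fig:fullerene_operations-dual}.

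The natural parameter driving the case analysis is the combinatorial distance and relative position of the twelve 5-vertices in the triangulation. The concrete steps I would carry out are: (i) from each 5-vertex $v$, follow the two ``straight'' dual paths of 6-vertices leaving in directions matching the $L$ and $B$ patterns, and record what happens when such a path first reaches another 5-vertex; (ii) show that if two 5-vertices are linked by a short straight configuration of length $i$, an $L_i$-reduction is immediately available, and that the slightly more general matching pattern for $B_{i,j}$ gives a $B_{i,j}$-reduction; (iii) verify in each case that the reduction yields a \emph{simple} triangulation (no identified vertices, no duplicated edges, and the twelve-pentagon count is preserved), so that the result is indeed a smaller fullerene; (iv) argue that whenever none of these local patterns occurs, the rigidity of the triangulation forces the 5-vertices to cluster into two symmetric polar caps of six pentagons each connected by parallel hexagonal strips, which identifies $G$ as either $C_{28}(T_d)$ or a type-(5,0) nanotube.

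Step (iv) splits cleanly: if every pair of 5-vertices is too far apart for the $L$-pattern and no $B$-pattern fits, then walking alternately around the two ``halves'' of the triangulation forces each half to be a genuine pentagonal cap and forces the connecting hexagons to line up in parallel zigzag belts, which recovers exactly the family generated by $F$ from $C_{30}(D_{5h})$; the remaining small case that does not fit this scheme is checked by hand to be $C_{28}(T_d)$. A separate verification is needed that these two excluded families are indeed unreachable by $L$- and $B$-reductions, but this is the easier direction: one just inspects the $L$- and $B$-patterns against the (highly symmetric) pentagon arrangements of nanotubes and of $C_{28}(T_d)$ and observes that every candidate reduction either fails a distinctness condition on the labelled faces or would ``wrap around'' the tube.

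The main obstacle is precisely step (iv): turning the local absence of $L$- and $B$-patterns into a global structural conclusion. Local-to-global rigidity arguments of this type usually require careful bookkeeping of where the twelve 5-vertices are allowed to sit once several directions have been excluded, together with a small but nontrivial parity/closing-up argument to show that the hexagonal belts must match up into a nanotube rather than into some exotic pentagon configuration. The other subtle point, easy to underestimate, is the ``reduction is legal'' check in step (iii): an apparent $L$- or $B$-pattern can hide a repeated face among the $f_k, g_k$, and ruling this out in the remaining borderline configurations is where most of the technical work of the proof will live.
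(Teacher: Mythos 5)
The first thing to note is that the paper does not actually prove this theorem: it is imported from Hasheminezhad, Fleischner and McKay~\cite{hasheminezhad_08}, and the only ``proof'' present is a citation (just as for Lemma~\ref{lemma_reduction_nonipr}). So the comparison is between your outline and that reference, whose strategy yours does broadly mirror: work in the dual, induct on the number of vertices, show that every fullerene outside the exceptional classes admits an $L$ or $B$ reduction, and show that the fullerenes admitting none are exactly $C_{28}(T_d)$, $C_{20}$ and the type-(5,0) nanotubes.

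That said, your proposal has a concrete logical gap beyond the fact that steps (ii)--(iv), which contain essentially all of the mathematical content, are deferred rather than carried out. You correctly state at the outset that the inductive step needs a reduction to a smaller fullerene $G'$ that is \emph{again not an exception}; this is essential, because $C_{28}(T_d)$ and the nanotubes are irreducible with respect to $L$ and $B$ (hence not constructible from $C_{20}$), so a reduction landing on one of them cannot be fed into the inductive hypothesis. But steps (i)--(iv) only distinguish ``some reduction exists'' from ``no reduction exists''; nothing in the plan controls \emph{where} a reduction lands. The exceptional fullerenes do have children under $L$ and $B$ expansions --- this is precisely why the generator in this paper uses $C_{28}(T_d)$ as a second seed --- and for such a child at least one available reduction leads back to an exception, so you must exhibit an alternative one, which your plan never does. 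You would either need to strengthen steps (i)--(iii) to produce a reduction that provably avoids the exceptional classes, or prove the weaker statement ``every reducible fullerene has a reduction'' and then separately analyse the children of the exceptional fullerenes. A secondary point, which you do flag yourself, is that verifying an apparent $L$ or $B$ pattern is \emph{legal} (all faces $f_k$, $g_k$ distinct) is where much of the technical work lives; in this paper the analogous verification in Lemma~\ref{lemma_one_L0} needs nontrivial input (cyclic 5-edge-connectivity of fullerenes), so asserting that the borderline configurations ``are checked'' is not yet an argument.
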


Our algorithm uses this theorem by applying $L$ and $B$ expansions
starting at $C_{20}$ and $C_{28}(T_d)$, together with separate (easy) computation
of the type-(5,0) nanotube fullerenes.

%In other words, each reducible fullerene can be reduced using an $L$
%or $B$ reduction. Our algorithm applies 
%
%The only fullerenes which cannot be obtained by applying an $L$ or $B$
%expansion to a smaller fullerene are $C_{20}$ (the dodecahedron),
%$C_{28}(T_d)$ and the type-(5,0) nanotube fullerenes of which
%$C_{30}(D_{5h})$ is the smallest (see
%Figure~\ref{fig:irred_fullerenes}). We call these
%three fullerenes \textit{irreducible}
%and all other fullerenes \textit{reducible}.
%$C_{20}$ and $C_{28}(T_d)$
%will be the starting points
%for the main generation algorithm.
%Type-(5,0) nanotubes
%will be generated separately from $C_{30}(D_{5h})$ by the $F$ operation.
% repeated again next

\begin{figure}[h!t]
	\centering
	\includegraphics[width=0.8\textwidth]{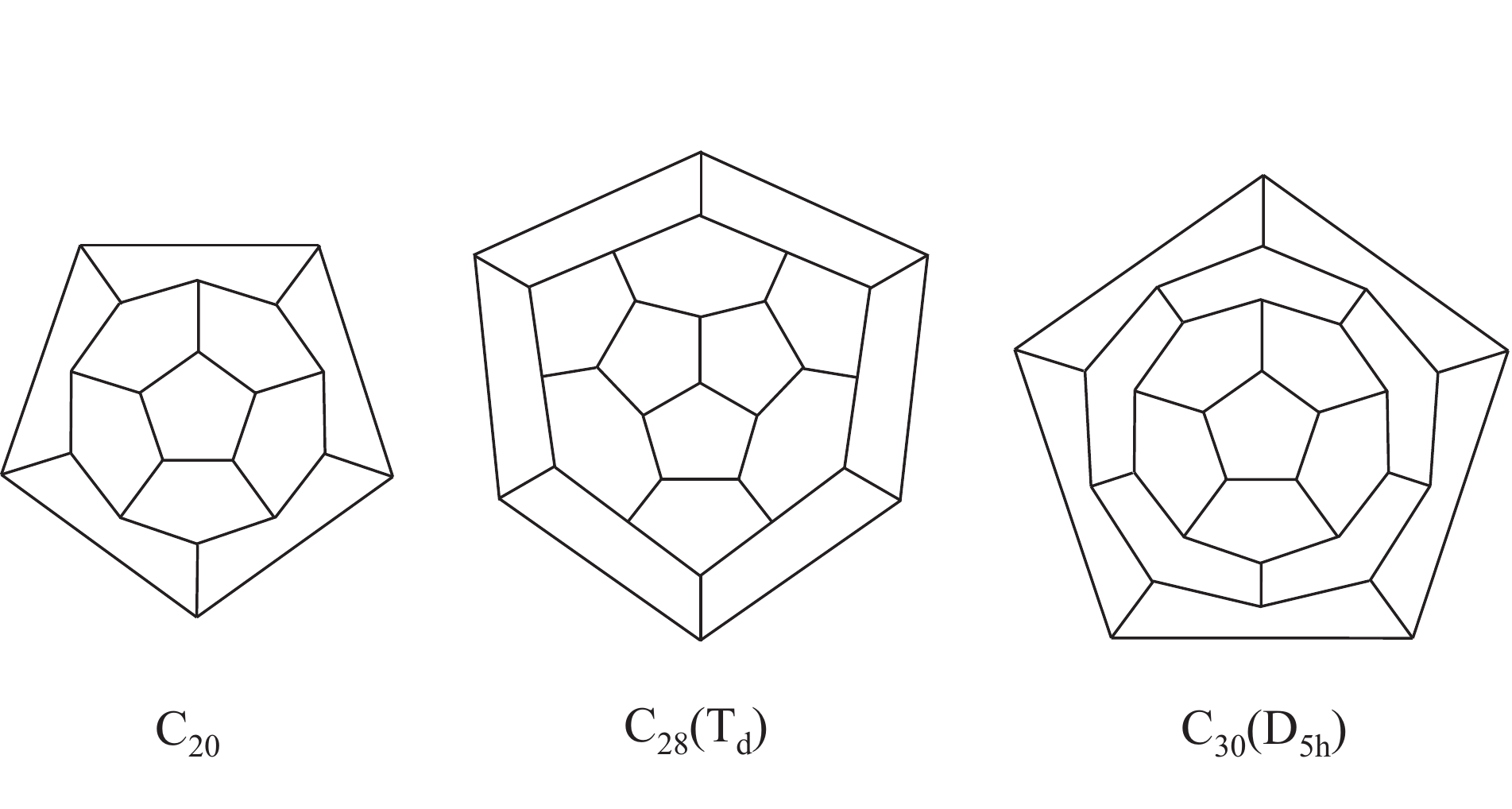}
	\caption{The irreducible fullerenes.}
	\label{fig:irred_fullerenes}
\end{figure}

%It is easy to see that all type-(5,0) nanotube fullerenes can be
%obtained by recursively applying operation $F$ to $C_{20}$.
%The algorithm we will describe works with duals of fullerenes,
%which are translated to fullerenes on output. So
%we will use these results for the duals of fullerenes. 
%
%The
%smallest type-(5,0) nanotube is $C_{30}(D_{5h})$. \jg{Or could also
%start from $C_{30}(D_{5h})$ and recursively apply $F$ to it, if that
%would be less confusing for the reader.}

\subsection{Isomorphism rejection and optimizations}

If the expansions are applied in all possible ways,
lots of isomorphic copies will be generated, but we wish
to output only one example of each type. 
We use the canonical construction path method~\cite{mckay_98}, but in the following
we do not assume the reader to be
familiar with the method.

In order to use this method, we first have to define a
\textit{canonical reduction} for every reducible dual fullerene~$G$.
This reduction must be unique up to automorphisms of $G$. 
We call the dual fullerene which is
obtained by applying the canonical reduction to $G$ the
\textit{canonical parent} of $G$ and an expansion that is the inverse
of a canonical reduction in the extended graph a \textit{canonical expansion}.

We also define an equivalence relation on the set of all expansions or reductions
of a given dual fullerene $G$. An expansion is completely characterized by the
patch that is replaced with a larger patch. Two expansions are called
equivalent if there is an automorphism of $G$ mapping the two corresponding
patches onto each other. For reductions, the definition is similar,
but in addition to the patch, a rotational direction
is necessary to uniquely encode a reduction of type $L$. This direction can be
a flag describing whether the new position of the pentagon is in clockwise
or counterclockwise position of the path connecting the pentagons. 
Two type $L$ reductions are equivalent if the patches are mapped onto each other
by an orientation preserving automorphism and the flags are the same or
they are mapped onto each other by an orientation reversing automorphism 
and the flags are different.

The two rules of the canonical construction path method applied to dual fullerenes are:

\begin{enumerate}

\item Only accept a dual fullerene if the last step in its construction was a
canonical expansion. 

\item For each dual fullerene $G$ to which expansions are applied, 
only apply one expansion from each equivalence class of expansions.

\end{enumerate}

The expansions/reductions must of course
be represented in an efficient way.  Reductions are represented
by triple $(e,x,d)$, where $e$ is a directed edge that is the first edge
on the central path between the two pentagons, $x$ is the parameter
set for the reduction (such as ``(2,3)'' for $B_{2,3}$) and $d$ is a
direction.  For $B$ reductions, $d$ indicates whether the turn in
the path is to the left or the right.  For $L$ reductions, $d$ 
distinguishes between this reduction and its mirror image.
Since $e$ can be at either end of the path, there are two equivalent
triples for the same reduction, as illustrated in 
Figure~\ref{fig:representing_triple}.
We call these triples the
\textit{representing triples} of the reduction.
Expansions are also represented by triples in similar fashion.

When we translate the notion of equivalent reductions or
expansions to representing triples,
then the equivalence relation is generated by two relations.
The first is that two triples
are equivalent if they represent the same reduction. The second is that 
$(e,x,d)$ and $(e',x',d')$ are equivalent if $x=x'$ and in case $d=d'$ the edge
$e$ can be mapped to~$e'$ by an orientation preserving automorphism and in case
$d\not =d'$  the edge $e$ can be mapped to~$e'$ by an orientation reversing automorphism.

\begin{figure}[h!t]
	\centering
	\resizebox{0.65\textwidth}{!}{\input{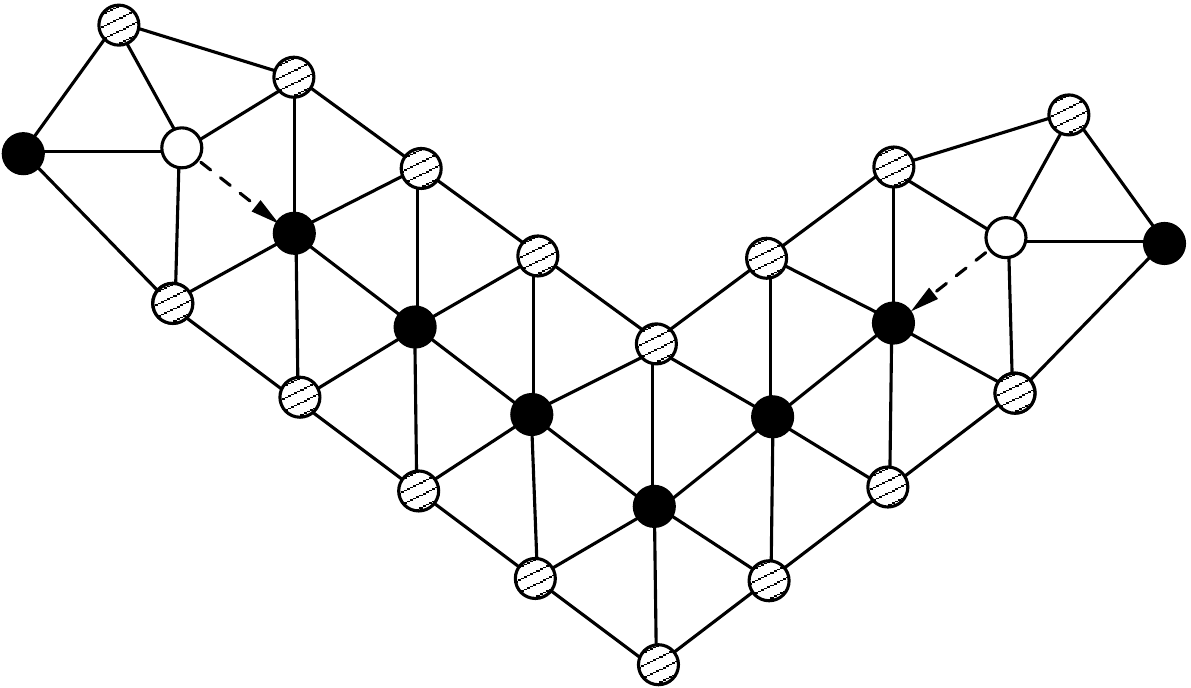_t}}
	\caption{An example of two triples $(e_0, (3,2), 1)$ and $(e_1, (2,3), 0)$ representing the same $B$ reduction.}
	\label{fig:representing_triple}
\end{figure}

For an efficient implementation of the canonicity criteria it is important
that in many cases simple and easily computable criteria can decide on
the canonical reduction or at least reduce the list of possible reductions.
To this end 
we assign a 6-tuple
$(x_0,\dots ,x_5)$ to every triple $(e,p,f)$
representing a possible reduction. We then choose
the canonical reduction to be a reduction which has a representing
triple with the smallest 6-tuple.

The values of $x_0,...,x_4$ are combinatorial invariants of increasing
discriminating power and cost. The value of $x_0$ is the
\textit{length} of the reduction of which $(e,p,f)$ is a representative.
The length of the reduction is the distance between the two
5-vertices of the reduction before actually applying the reduction. 
So in case of a $B_{x,y}$ reduction (2 parameters)
it is $x+y+2$ and in case of an $L_x$ reduction (1 parameter) it is
$x+1$.
Thus we give priority to short reductions. These are easier to detect
and allow some look-ahead. The entry $x_1$ is the
negative of the length of the longest straight path in the reduction.
For an $L$ reduction, the
value of $x_1$ is $-x_0$, which does not distinguish between two
reductions with the same value of $x_0$. For a $B_{x,y}$
reduction it is $- \max\{x,y\}-1$, which sometimes distinguishes 
between $B$ reductions with the same value of $x_0$ and always distinguishes
between an $L$ and a $B$ reduction with the same $x_0$.

The entries $x_2, x_3$ and $x_4$ are strings which contain the degrees
of the vertices in well-defined neighbourhoods of
the edge in the triple. These neighbourhoods are of increasing 
(constant) size.

In each case the value $x_i$ is only computed for those
representing triples
that have the smallest value of $(x_0,...,x_{i-1})$. As our main interest 
is whether an expansion we applied is canonical, we can also stop as soon as we have
found a smaller 6-tuple, which may just mean a reduction with
smaller value of $x_0$.
In case of a unique
triple with minimal value for $(x_0,...,x_{i-1})$ or two such triples
representing the same reduction, we have found the canonical reduction and
can stop the computation of the remaining values. If after the
computation of $(x_0,...,x_{4})$ there is still more than one possibly
canonical triple, we define $x_5$ as a string encoding the whole
structure of the graph relative to the edge and the direction in the representing
triple.  See the article of Brinkmann and McKay~\cite{brinkmann_07} for details of this string, which can be in
short
be described as the code of a BFS-numbering starting at that edge and evaluating
the neighbours of a vertex in the rotational order (clockwise/counterclockwise)
given by the direction. Two triples coding patches in two graphs
(that may be identical or not) containing the same directions are assigned the same value $x_5$
if and only if there is an orientation preserving isomorphism of the graphs mapping the edges
in the triples onto each other. In case of different directions, the same value of $x_5$
is assigned if and only if there is such an orientation reversing automorphism.
This final value $x_5$ makes sure that 
two patches (in the same or different graphs) with the same value of
$(x_0,...,x_{5})$ can be mapped onto each other by an isomorphism $\phi()$
of the graph. When performing the corresponding reductions, the patches 
are replaced by smaller patches and replacing the images $\phi(v)$ 
of vertices inside the patch appropriately, one gets an isomorphism
of the reduced graphs that maps 
the reduced patches onto each other.

When $x_5$ is computed and the graph $G$ that is tested for canonicity 
is accepted,
as a byproduct we also have 
the automorphism group of $G$. 
As possible reductions are represented by edges starting at pentagons, we have a constant
upper bound for the number of possible reductions to be evaluated.
For a given 
triple, each of $x_0,...,x_{4}$ can be computed in constant time and $x_5$
can be computed in linear time, so the canonicity test can be done in linear time.

Even though it is a nice feature that deciding canonicity 
of a given set of possible reductions can be done in linear
time, for the practical performance it is more important that 
computing the combinatorial invariants $(x_0,...,x_4)$ is of
a small constant cost. For dual fullerenes with 152 vertices 
(fullerenes with 300 vertices), the discriminating 
power of $(x_0,...,x_4)$ is enough to
decide whether or not the last expansion was canonical in 
more than 99.9\% of the cases. 

In some cases these cheap invariants also allow look-aheads for deciding
whether or not an expansion can be canonical before actually
performing~it. When making the lists of possible
expansions, we can often already tell that a certain expansion cannot
be canonical since it will not destroy all shorter reductions or since
there will still be a reduction of the same length but with a smaller
value for $x_2$. This avoids the application of a lot of
non-canonical expansions. Counting only expansions passing this
look-ahead, for dual fullerenes with 152 vertices
still in 95.6\% of the expansions a final decision can be found
by only computing $(x_0,...,x_4)$.

If there is only one representing triple with
minimal value for $(x_0,...,x_i)$ $(i \le 4)$, the
automorphism group of $G$ is trivial, so no extra computations are necessary. 
This happens in 80.9\% of the cases for dual fullerenes with 152 vertices.
The ratio is decreasing with the number of vertices.
For 102 vertices of the dual fullerene it is 93.3\% and for 127 vertices 
it is 86.9\%.

\begin{theorem}
Assume that exactly one representative of each isomorphism class of 
dual fullerenes on up to $n-2$ vertices is given.
Suppose we perform the following steps:
\begin{enumerate}\itemsep=0pt
\item Perform one expansion of each
equivalence class of $L$ and $B$ expansions which lead to a 
dual fullerene with $n$ vertices.

\item Accept each new dual fullerene if and only if a triple
representing the inverse of the last expansion has the minimal value of
$(x_0,...,x_5)$ among all possible reductions.
\end{enumerate}

Then exactly one representative of each isomorphism
class of reducible dual fullerenes with $n$ vertices is accepted.
\end{theorem}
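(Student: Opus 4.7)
The plan is to verify the two standard requirements of McKay's canonical construction path method: \emph{completeness} — every reducible dual fullerene on $n$ vertices arises from some accepted expansion — and \emph{soundness} — no isomorphism class is produced twice. The chosen canonicity function is ``the reduction whose representing triple has minimum $(x_0,\dots,x_5)$''; the properties we rely on are (a) every reducible fullerene admits at least one $L$ or $B$ reduction (Theorem~\ref{theorem:construction}), and (b) the invariant $x_5$ characterizes a representing triple up to the appropriate (orientation-preserving or reversing) isomorphism of the surrounding graph, as spelled out in the excerpt.

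For completeness, let $G$ be an arbitrary reducible dual fullerene on $n$ vertices. By Theorem~\ref{theorem:construction} the set of $L$/$B$ reductions of $G$ is nonempty, so a representing triple $T$ of minimum $(x_0,\dots,x_5)$ exists; let $r$ be the corresponding reduction and $H$ its result. By hypothesis the isomorphism class of $H$ contains a representative~$G'$ in the input list, and an isomorphism $\psi\colon H\to G'$ transports $r^{-1}$ to an expansion $e$ of~$G'$. Step~1 applies some representative of the equivalence class $[e]$, producing a dual fullerene $G^\ast\cong G$. Pushing $T$ back via the resulting isomorphism $G^\ast\to G$ shows that the inverse of the applied expansion still has minimum $(x_0,\dots,x_5)$ in $G^\ast$, so Step~2 accepts it.

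For soundness, suppose two accepted graphs $G_1^\ast,G_2^\ast$ are isomorphic. Write them as the results of expansions $e_1,e_2$ from parent representatives $G_1',G_2'$, and let $\alpha\colon G_1^\ast\to G_2^\ast$ be an isomorphism. The representing triples~$T_i$ of $e_i^{-1}$ attain the canonical minimum in their own graph, so $\alpha(T_1)$ and $T_2$ both attain it in $G_2^\ast$; the defining property of~$x_5$ then produces an automorphism $\beta$ of $G_2^\ast$ sending $\alpha(T_1)$ to $T_2$ (with the direction flag handled by the orientation of~$\beta$). The composition $\beta\circ\alpha$ maps the patch of $e_1^{-1}$ onto that of $e_2^{-1}$, and removing these patches yields an isomorphism between the two parents. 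Since each parent iso class has only one representative, $G_1'=G_2'$, and under this identification $e_1$ and $e_2$ lie in the same equivalence class of expansions of $G_1'$; as Step~1 applied only one expansion from that class, we conclude $G_1^\ast=G_2^\ast$.

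The principal technical point to watch is the interplay between the direction flag~$d$ in a representing triple and orientation-reversing isomorphisms: the equivalence relation in the excerpt is set up so that an orientation-preserving isomorphism preserves $d$ while an orientation-reversing one flips it, and the same distinction must be honoured when comparing $(x_0,\dots,x_5)$ and when transporting triples through the isomorphisms $\alpha,\beta,\psi$. Once this bookkeeping is aligned, the rest is routine: the invariants $x_0,\dots,x_4$ serve only as quickly computable refinements, and the final invariant $x_5$, a canonical BFS code relative to an oriented edge, carries exactly the information needed to translate ``equal $x_5$'' into ``related by an isomorphism of the correct orientation'', which is what both halves of the argument require.
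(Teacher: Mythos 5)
Your proposal is correct and follows essentially the same route as the paper's own proof: completeness by transporting the canonical reduction of an arbitrary reducible $G$ to an equivalent expansion of the listed parent representative, and soundness by using the discriminating property of $x_5$ to turn two accepted isomorphic results into identical parents with equivalent expansions, contradicting Step~1. The paper's version is terser but relies on exactly the same two ingredients (Theorem~\ref{theorem:construction} and the stated property of $x_5$), so there is nothing substantive to add.
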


\begin{proof}

Let $G$ be a reducible dual fullerene with $n$ vertices.
By Theorem~\ref{theorem:construction} there is at least one
reduction, and so a canonical reduction~$\rho$, that applies to $G$.
The graph resulting from $\rho$ is isomorphic to a graph in the input set,
which has an expansion which is equivalent to the inverse of $\rho$. 
But this expansion produces
a graph isomorphic to $G$ and the parameters of its inverse reduction
are the same as those of $\rho$, so the result of
the expansion is accepted.

This implies that at least one representative of each isomorphism
class in question is generated. It remains to be shown that at most
one is generated.

Suppose that the algorithm accepts two isomorphic
fullerenes $G$ and $G'$ with $n$ vertices. As they are isomorphic, the
canonical reductions have the same parameter set $(x_0,...,x_{5})$.
As they were both accepted, they were constructed by a
canonical expansion, so 
 -- as mentioned before -- the two parents $G_0$ and $G'_0$ 
are isomorphic and there is an
isomorphism that maps the corresponding expansions onto each other. 
By our assumption this means that
$G_0$ and $G'_0$ are identical and that the two expansions are
equivalent, which contradicts step~1.
\end{proof}

By recursively applying expansion $F$ to $C_{20}$, all
type-(5,0) fullerenes are constructed. As this constructs all
type-(5,0) fullerenes exactly once and these fullerenes can not
be constructed by $L$ or $B$ expansions, this completes the algorithm.

\subsection{Optimizations}

As most fullerenes contain short reductions and as
we give priority to short reductions, by far most long expansions
are not canonical. For efficiency reasons it is interesting
to determine an upper bound on the length of a canonical expansion.

\begin{lemma} \label{lemma_reduction_nonipr}
Reducible dual fullerenes which contain adjacent 5-vertices have an $L_0$, $L_1$ or $B_{0,0}$ reduction.
\end{lemma}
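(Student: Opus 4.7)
The plan is a local case analysis around the pair of adjacent $5$-vertices. Let $u,v$ be adjacent $5$-vertices in the dual with common edge $e=uv$. Because the dual of a fullerene is a simple planar triangulation, $e$ lies in exactly two triangles, so $u$ and $v$ share exactly two common neighbours $w_1,w_2$. Since $u$ and $v$ have degree $5$, the remaining neighbours of $u$ are two further vertices $a_1,a_2$, and those of $v$ two further vertices $b_1,b_2$; together with $w_1,w_2$ these are necessarily pairwise distinct in the local picture.

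First I would test whether the $L_0$ patch of Figure~\ref{fig:fullerene_operations-dual} matches the configuration at $uv$. This succeeds precisely when the second-neighbour vertices of $u,v$ that appear in the $L_0$ patch are all distinct from one another and from vertices outside the patch; if so, $L_0$ applies and we are done. So I may assume that some such identification occurs, creating a short non-facial cycle through $u$ or $v$. I would then enumerate the identifications that can block $L_0$. Because every vertex of the triangulation has degree in $\{5,6\}$, each such identification forces a third $5$-vertex into a prescribed position within distance two of $u$ or $v$. If this third $5$-vertex lies at the end of a straight length-two path through a single $6$-vertex adjacent to $u$ or $v$, the resulting patch is exactly an $L_1$ configuration; if it lies at the end of a bent length-two path, the patch is a $B_{0,0}$ configuration.

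The main obstacle is the bookkeeping: each possible identification among the bounded-size neighbourhood must be matched against the distinctness preconditions of $L_0$, $L_1$ and $B_{0,0}$, and in the $L$-cases one must also verify that an appropriate choice of orientation flag (clockwise/counterclockwise) turns the patch into a legitimate $L$-reduction rather than merely its mirror image. The residual subtlety is the set of configurations in which \emph{no} short reduction appears to be realised; here I would invoke Theorem~\ref{theorem:construction} to derive a contradiction, since the only fullerenes admitting no $L$ or $B$ reduction at all are $C_{20}$, $C_{28}(T_d)$ and the $(5,0)$-nanotubes, all of which are excluded by the reducibility hypothesis of the lemma.
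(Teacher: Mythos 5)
The paper does not actually prove this lemma: its ``proof'' is a one-line citation to Hasheminezhad et al.~\cite{hasheminezhad_08}, so there is no in-paper argument to compare against. Your attempt to supply a genuine local case analysis is therefore more ambitious than what the paper does, and its overall shape (test the $L_0$ patch at an adjacent pair of $5$-vertices, then analyse the identifications that obstruct it) is the right one. However, as written it has two genuine gaps. First, the claim that every identification blocking $L_0$ ``forces a third $5$-vertex into a prescribed position within distance two of $u$ or $v$'' is asserted, not derived, and it is not the mechanism that actually controls these degenerations. An identification among the faces or vertices of the patch produces a short cycle, i.e.\ a cyclic $4$- or $5$-edge-cut in the primal fullerene; the way such configurations are excluded is via the cyclic $5$-edge-connectivity of fullerenes and the Kardo\v{s}--\v{S}krekovski classification of nontrivial $5$-edge-cuts (exactly the tool this paper uses in its proof of Lemma~\ref{lemma_one_L0}), which either rules the identification out or forces the graph to be a type-$(5,0)$ nanotube, excluded by reducibility. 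Without an argument of this kind your enumeration of ``blocking identifications'' has no reason to terminate in an $L_1$ or $B_{0,0}$ patch.

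Second, the fallback step is a non sequitur. Theorem~\ref{theorem:construction} guarantees that a reducible fullerene admits \emph{some} $L$ or $B$ reduction, with no bound on its length; a residual configuration in which no reduction of length at most $2$ is realised is therefore perfectly consistent with that theorem, and no contradiction follows. Since the whole content of the lemma is the length bound, this step cannot close the remaining cases, and (depending on how~\cite{hasheminezhad_08} structures its proofs) leaning on Theorem~\ref{theorem:construction} here also risks circularity, as the short-reduction statement is itself an ingredient of the recursive-structure theorem in that reference.
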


\begin{proof}
For a proof, see the article of Hasheminezhad et al.~\cite{hasheminezhad_08}.
\end{proof}

So each reducible dual non-IPR fullerene has a reduction with length at most~2.
In dual IPR fullerenes the shortest reduction is
a reduction with the same length as the minimum distance of two 5-vertices
in the dual fullerene.

In dual fullerenes where the
shortest distance between two 5-vertices is at least $d$,
the sets of vertices at distance at most $\lfloor \frac{d-1}{2} \rfloor$
of different vertices are disjoint.
This gives us a lower bound of $12\, f(\lfloor \frac{d-1}{2} \rfloor)$ for
the number of vertices in the fullerene, where $f(x) = 1 + \frac52 (x+1)x$.

So expansions of length $d$ are not canonical if the expanded
graph contains fewer than $12\, f(\lfloor \frac{d-1}{2} \rfloor)$ vertices.
This result does not only help to avoid the application of non-canonical 
expansions, but also avoids the need to search for long expansions.

We can often determine even sharper upper
bounds for the maximum length of a canonical expansion:

\begin{lemma} \label{lemma_one_L1}
If a dual fullerene $G$ has a reduction of length $d\le 2$, all children $G'$
of $G$ have a reduction of length at most $d+2$.
\end{lemma}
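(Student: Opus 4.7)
The plan is to track how the short reduction of $G$ survives in a child. Let $\rho$ be the given reduction of $G$ of length $d \le 2$; by Lemma~\ref{lemma_reduction_nonipr} it has type $L_0$, $L_1$, or $B_{0,0}$, and its two 5-vertices $u, v$ sit at dual distance $d$. Let $e$ be the expansion used to form $G'$, let $P$ be the patch of $G$ replaced by $e$, and let $s, t$ be the two 5-vertices of $G$ that are moved apart by $e$. The central observation is that $L$ and $B$ expansions introduce only 6-vertices, so the twelve 5-vertices of $G'$ correspond canonically to those of $G$, with only $s$ and $t$ shifted along a path in the enlarged patch; every other 5-vertex keeps its relative position to the boundary $\partial P$.

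First I would dispose of the easy case in which the small patch $Q$ of $\rho$ (the 5-vertices $u, v$, the dual path of length at most $2$ between them, and the incident faces) does not overlap the interior of $P$. Then $\rho$ is preserved in $G'$ verbatim, so $G'$ has a reduction of length $d \le d+2$. This handles every situation in which $e$ acts in a region of $G$ disjoint from $Q$, which in particular is most of the possible expansions.

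The remaining case is a nontrivial overlap between $Q$ and $P$. Because $Q$ is tiny there are only finitely many such configurations, and I would split on $|\{u,v\} \cap \{s,t\}|$. When the intersection is empty, $u$ and $v$ persist as 5-vertices of $G'$, and a direct check on how $\partial P$ can thread through $Q$ shows that their dual distance in $G'$ is still at most $d+2$. When the intersection is nonempty, one or both of $u, v$ is actively displaced by $e$; here I would use the pictures of the $L_i$ and $B_{i,j}$ expansions to exhibit a short reduction in $G'$ explicitly---either a version of $\rho$ stretched along the path of $e$, or a new reduction formed between a displaced 5-vertex and some non-active 5-vertex lying just outside $\partial P$. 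The main obstacle is this last step, which amounts to a careful case-by-case inspection of the expansion patches; the slack of $+2$ in the bound is precisely what absorbs the at-most-one-step shift of each endpoint of $\rho$ when $e$ reaches into $Q$.
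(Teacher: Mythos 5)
Your overall strategy---follow the short reduction of $G$ into the child and argue that the $+2$ slack absorbs a shift of at most one at each end---is the same as the paper's, and your easy case (the configuration of $\rho$ disjoint from the replaced patch) matches. But the decisive step of your argument is announced rather than carried out: both the ``direct check on how $\partial P$ can thread through $Q$'' and the ``careful case-by-case inspection of the expansion patches'' are exactly where the content of the lemma lives, and the claim that each endpoint of $\rho$ is displaced by at most one step is asserted without a reason. Since the families $L_i$ and $B_{i,j}$ are infinite, a literal case-by-case inspection of expansion patches is not obviously finite unless you first bound the length of the relevant expansions, which you do not do. As it stands the proposal is a plan with the hard part deferred.

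The paper closes this gap with two devices you should note. First, it splits on whether $G'$ is IPR: if not, Lemma~\ref{lemma_reduction_nonipr} gives a reduction of length at most $2\le d+2$ outright; if so, the shortest reduction of $G'$ has length equal to the minimum distance between two 5-vertices, so one only has to bound a distance and never has to verify that a stretched copy of $\rho$ (or some ad hoc new configuration) is literally a valid reduction---this is what makes your ``exhibit a short reduction explicitly'' step unnecessary. Second, instead of a case analysis on $|\{u,v\}\cap\{s,t\}|$, it takes the shortest path $W$ of length $d\le 2$ between the two 5-vertices of $\rho$ and observes that, since $W$ has at most three vertices and the vertices of the replaced patch $P$ are distinct, $W$ meets $P$ in at most two maximal subpaths; each such subpath (from a possibly relocated 5-vertex inside $P$ to the boundary, or between two boundary vertices through $P$) stretches by at most one under the expansion, giving the bound $d+2$ uniformly for all $L$ and $B$ expansions. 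If you want to salvage your version, you need to supply precisely this uniform stretching estimate; the rest of your outline then collapses into the paper's argument.
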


\begin{proof}
If $G'$ is not IPR, this follows from
Lemma~\ref{lemma_reduction_nonipr}, so assume that $G'$ is IPR.
The length of the shortest reduction is then the shortest distance between two
5-vertices. Let us look at the shortest path $W$ between two 5-vertices
allowing a reduction of length $d$ in $G$. 

As $d\le 2$ and as all vertices in the patch $P$ used for expansion must be distinct,
$W$ can contain at most 2 maximal subpaths entering $P$ and ending there, 
starting in $P$ and leaving it or crossing $P$.

The distance between a 5-vertex in $P$ from vertices on the boundary
grows at most by 1.  The same is true for each pair of vertices on the
boundary. So the path $W$ can grow in two places by at most $1$,
proving the result. 
\end{proof}

This lemma could be proven for larger $d$ if one required the child to be
canonical, but as $12 \, f(\frac{5-1}{2}) = 192$, all dual fullerenes
with less than 192 vertices (or fullerenes with 380 vertices) have a
reduction of length at most~4. Therefore, even for $d=2$,
Lemma~\ref{lemma_one_L1} is
only useful for fullerenes with at least 380 vertices. 

\begin{lemma} \label{lemma_one_L0}
If a dual fullerene $G$ has an $L_0$ reduction, all canonical 
children $G'$ of $G$ have a reduction of length at most 2.
\end{lemma}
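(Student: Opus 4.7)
The plan is to argue by contradiction. Suppose $G$ has an $L_0$ reduction, witnessed by two adjacent 5-vertices $u,v$ in the dual, and let $G'$ be a canonical child of $G$ obtained by a canonical expansion $E$. Assume $G'$ has no reduction of length at most 2. Then by Lemma~\ref{lemma_reduction_nonipr}, $G'$ is IPR, and since the canonical reduction of $G'$ equals the inverse of $E$ and has minimal length, $E$ itself has length at least 3.

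The first step is to locate the edge $e=uv$ with respect to the patch $P$ of $E$ in $G$. If $e$ is not contained in the interior of $P$, then both its endpoints and the edge itself are preserved by $E$, so $u$ and $v$ remain adjacent 5-vertices in $G'$, contradicting the IPR assumption. Hence $e$ must be an interior edge of~$P$.

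The second step is a case analysis on the positions of $u$ and $v$ inside $P$, using the explicit structure of the length-$\geq 3$ expansion patches, namely $L_i$ with $i\geq 2$ and $B_{i,j}$ with $i+j\geq 1$, as described by Hasheminezhad et al.~\cite{hasheminezhad_08} (see also Figures~\ref{fig:fullerene_operations} and~\ref{fig:fullerene_operations-dual}). The sub-cases are: both $u,v$ on the boundary of $P$ joined by an interior chord; exactly one on the boundary and the other interior; or both in the interior. In each sub-case I would track the images of $u,v$ together with any additional pentagons mandated by the $L_0$ surroundings or by the $f_k/g_k$ labelled faces of~$E$, and show that $G'$ still contains two 5-vertices at mutual distance at most 2, producing the required length-$\leq 2$ reduction.

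The main obstacle lies in the sub-case where $u$ and $v$ are the interior operation pentagons of a length-3 expansion ($L_2$, $B_{0,1}$, or $B_{1,0}$): here the generic bound in the proof of Lemma~\ref{lemma_one_L1} only yields distance at most 3 between their images in $G'$, and one must genuinely use canonicity of $E$ together with the specific geometry of these three expansion patches to produce a strictly shorter reduction in $G'$. This local geometric bookkeeping on the interaction between the $L_0$ configuration and the length-3 expansion patches is the technical heart of the argument.
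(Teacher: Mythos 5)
Your skeleton matches the paper's up to the point where the real work begins, but the proposal stops exactly there, and the direction you indicate for finishing the hard case is not the one that works. The paper first invokes Lemma~\ref{lemma_one_L1} to conclude that a canonical expansion leading to an IPR child has length at most $3$ (you should make this upper bound explicit; without it you would also have to deal with $L_i$, $i\ge 3$, and $B_{i,j}$, $i+j\ge 2$), disposes of lengths $1$ and $2$ immediately, and is then left with $L_2$ and $B_{1,0}$ only. For these it observes that if $G'$ had no reduction of length at most $2$, then every face $f_i,g_i$ on the boundary of the expansion patch would have to be a hexagon and both pentagons of the $L_0$ reduction would have to be the two operation pentagons of the expansion; hence $p_1$ and $p_2$ are simultaneously joined by the length-$3$ expansion path and adjacent via an edge outside that path, so the patch wraps around on itself. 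The punchline is that any such configuration forces a $4$-edge-cut or a nontrivial $5$-edge-cut in the fullerene, which is impossible: fullerenes are cyclically $5$-edge-connected (Bornh\"oft et al.) and the only fullerenes with a nontrivial $5$-edge-cut are the type-(5,0) nanotubes (Kardo\u{s} and \u{S}krekovski).

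This is the concrete gap in your proposal. In the sub-case you correctly single out as the heart of the matter, you propose to ``produce a strictly shorter reduction in $G'$'', but in the hypothetical configuration there is no such reduction to be found --- all boundary faces are hexagons and the two relocated pentagons sit at distance exactly $3$ --- so no amount of local bookkeeping inside the patch can close the case. What is actually needed is a global structural fact about fullerenes (cyclic $5$-edge-connectivity together with the classification of nontrivial $5$-edge-cuts) showing that the offending configuration cannot occur in any fullerene at all. Canonicity of $E$, which you invoke as the extra ingredient, contributes nothing here beyond having already bounded the expansion length by $3$; without importing the edge-connectivity results the argument cannot be completed along the lines you sketch.
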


\begin{proof}
If $G'$ is not IPR, this follows from
Lemma~\ref{lemma_reduction_nonipr}, so assume that $G'$ is IPR. 
By Lemma~\ref{lemma_one_L1}, $G'$ has a reduction of length
at most 3, so a canonical child was constructed by an expansion
of length at most 3. If $G'$ was constructed by an $L_0$, $L_1$ or $B_{0,0}$ expansion, the statement follows immediately.

Figure~\ref{fig:L0_red_expansion_L2}
and Figure~\ref{fig:L0_red_expansion_B10} show the only ways that an $L_2$
(resp.\ $B_{1,0}$) expansion can destroy an $L_0$ reduction which
involves two pentagons $p_1$ and $p_2$ such that the expanded
fullerene $G'$ contains no reduction of length shorter than 3. 
The
faces $f_i$ and $g_i$ $(1 \le i \le 4)$ which are on the boundary of
the $L_2$ or $B_{1,0}$ expansion have to be hexagons otherwise the
dual of $G'$ would contain 5-vertices which are at distance at most
2. Since $p_1$ and $p_2$ are involved in the $L_0$ reduction, they
must share an edge. So there is an edge $a \in \{e_1, e_2, e_3\}$
which is equal to an edge $b \in \{e_4, e_5, e_6\}$ and as the pentagons share an
edge, they must also share two faces each 
containing an endpoint of this common edge. It is easy to see
that for all possible choices of $a$ and $b$ this implies that a
fullerene containing a patch from
Figure~\ref{fig:L0_red_expansion_L2} or
Figure~\ref{fig:L0_red_expansion_B10} must have a
4-edge-cut or a 5-edge-cut. However it follows from the results of Bornh\"oft et al.~\cite{bornhoft_03} that fullerenes are cyclically 5-edge connected,
so 4-edge-cuts do not exist. Kardo\u{s} and \u{S}krekovski~\cite{kardos_08} showed that the
type-(5,0) nanotubes are the only fullerenes which have a non-trivial
5-edge-cut.

So there is no expansion which can be applied to $G$ such that the
shortest reduction of the expanded fullerene has length 3. Thus all
canonical children of $G$ have a reduction of length at most 2.
\end{proof}

\begin{figure}[h!t]
	\centering
	\resizebox{0.5\textwidth}{!}{\input{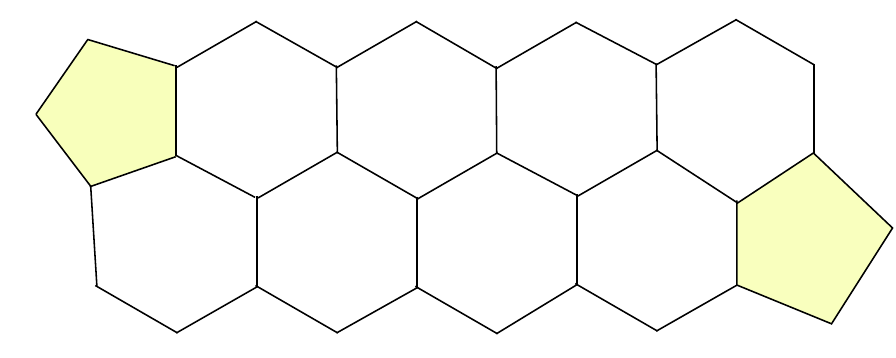_t}}
	\caption{The initial patch of an $L_2$ expansion involving two neighbouring pentagons $p_1$ and $p_2$. One of the edges from $\{e_1, e_2, e_3\}$ is equal to an edge in $\{e_4, e_5, e_6\}$.}
	\label{fig:L0_red_expansion_L2}
\end{figure}

\begin{figure}[h!t]
	\centering
	\resizebox{0.5\textwidth}{!}{\input{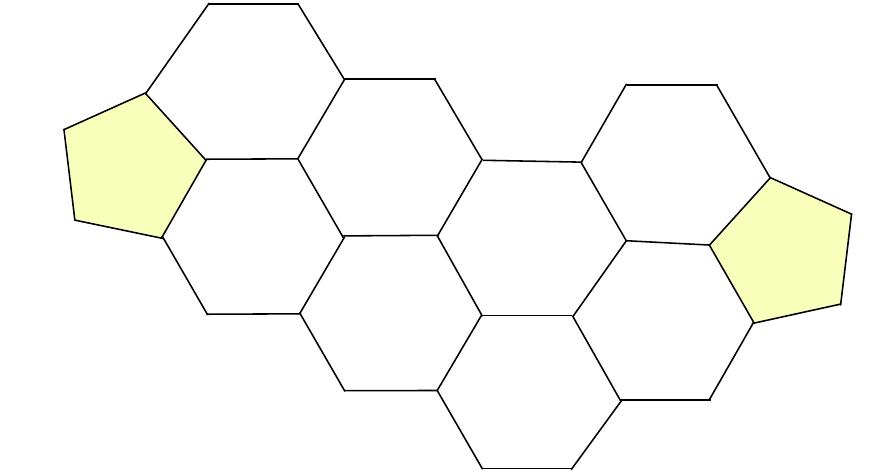_t}}
	\caption{The initial patch of a $B_{1,0}$ expansion involving two neighbouring pentagons $p_1$ and $p_2$. One of the edges from $\{e_1, e_2, e_3\}$ is equal to an edge in $\{e_4, e_5, e_6\}$.}
	\label{fig:L0_red_expansion_B10}
\end{figure}

For the next lemmas the following observation is useful:

\begin{obs}\label{obs:3inpatch}

If the set of vertices contained in the initial patch of an expansion of length $l$ contains
at least three 5-vertices (so in addition to the two 5-vertices of the expansion
there is at least one
more 5-vertex in the boundary), then in the extended patch there are two 5-vertices at distance at most $l/2 +1$.

\end{obs}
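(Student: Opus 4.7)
The plan is to exhibit a short route, inside the extended patch, from the third $5$-vertex to one of the two $5$-vertices created by the expansion. Let $p_1,p_2$ denote the latter, so that after the expansion they sit at graph distance exactly $l$ along a central path $P$ of length $l$ inside the extended patch (a straight path for an $L_i$-expansion, a path with a single bend for a $B_{i,j}$-expansion). Let $v$ denote the third $5$-vertex, which by hypothesis lies on the boundary of the patch.

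The key geometric input, read off from Figure~\ref{fig:fullerene_operations-dual}, is that the extended patch has \emph{width one} around $P$: the interior vertices of the dual patch are exactly the vertices on $P$, and each boundary face $f_k$ or $g_k$ of the primal patch shares an edge either with one of the two pentagons or with a hexagon lying on the central strip. Consequently every boundary vertex of the dual patch is adjacent to at least one vertex of $P$, so there is some $w\in P$ with $d(v,w)=1$. Since $w$ splits $P$ into two subpaths of total length $l$, the nearer of $p_1,p_2$ lies at distance at most $l/2$ from $w$ along $P$, and the triangle inequality yields
\[
\min\bigl(d(v,p_1),\,d(v,p_2)\bigr)\;\le\;d(v,w)+\min\bigl(d(w,p_1),\,d(w,p_2)\bigr)\;\le\;1+l/2,
\]
so $v$ together with the nearer of $p_1,p_2$ is the required pair of $5$-vertices.

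The only step requiring actual verification is the width-one claim, which must be checked uniformly for both operation families, and in the $B_{i,j}$ case on both sides of the bend; this is a direct inspection of the explicit patch pictures in Hasheminezhad et al.~\cite{hasheminezhad_08} and presents no essential difficulty. Everything else is just the triangle inequality applied to the central path.
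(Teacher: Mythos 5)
The paper states this Observation without any proof, so there is no in-text argument to compare against; your proposal has to stand on its own. Its skeleton --- locate the third $5$-vertex $v$ on the boundary, step onto the central path, then apply the triangle inequality along that path --- is the natural one, and the triangle-inequality half is correct. The problem is the ``width one'' claim, which you rightly single out as the only substantive step but then assert rather than verify, and which is false as you state it. An expansion of length $l$ adds $l+1$ vertices to the dual (the paper uses exactly this: $L_0$ adds $2$, $L_1$ and $B_{0,0}$ add $3$, $L_2$ and $B_{1,0}$ add $4$), while the interior of the initial dual patch is already a path of $l+1$ vertices running from pentagon to pentagon (compare Figures~\ref{fig:L0_red_expansion_L2} and~\ref{fig:L0_red_expansion_B10}). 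Hence the interior of the extended patch consists of $2(l+1)$ vertices arranged as two parallel (offset) rows, and the central path $P$ between the two new pentagons occupies only one of them; your assertion that ``the interior vertices of the dual patch are exactly the vertices on $P$'' is therefore not correct, and a path of $l+1$ vertices cannot be adjacent to every vertex of a boundary cycle that surrounds $2(l+1)$ interior vertices. Boundary vertices lying behind the row not containing $P$ are at distance $2$ from $P$, for which your estimate degrades to $l/2+2$. That is not a cosmetic loss: Lemma~\ref{lemma_three_L1s} invokes the Observation with $l=3$ and needs two $5$-vertices at distance at most $2$, whereas $l/2+2$ would only give $3$.

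To actually obtain $l/2+1$ you must use the offset adjacency between the two rows rather than a single hop onto $P$. Writing the old row as $q_0,\dots,q_l$ (now all $6$-vertices) and the new row as $w_0,\dots,w_l$ with the pentagons at $w_0,w_l$, a far-side boundary vertex $v$ is adjacent to two consecutive vertices $q_j,q_{j+1}$, and each $q_j$ is adjacent to $w_{j-1}$ and $w_j$ (brick-wall pattern). Running one estimate toward $w_0$ through $q_j$ and the other toward $w_l$ through $q_{j+1}$ gives $\min\bigl(d(v,w_0),d(v,w_l)\bigr)\le\min(1+j,\;l-j)\le (l+1)/2\le l/2+1$, with the end cases $j\in\{0,l-1\}$ giving distance at most $2$. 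The near-side boundary vertices are handled by your argument as written. This two-case analysis over which interior row the boundary vertex sees, together with the precise adjacency pattern read off from Figure~\ref{fig:fullerene_operations-dual}, is the real content of the Observation, and it is exactly the part your proposal skips.
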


\begin{lemma} \label{lemma_two_L1s}
If a dual fullerene $G$ has at least two reductions of length 2 which do not have
the same set of 5-vertices of the reduction,
all canonical children $G'$ have a reduction of length at most~3.
\end{lemma}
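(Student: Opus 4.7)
The plan is to follow the scaffolding of Lemma~\ref{lemma_one_L0}. If $G'$ is not IPR, the conclusion is immediate from Lemma~\ref{lemma_reduction_nonipr}, so assume $G'$ is IPR. By Lemma~\ref{lemma_one_L1}, the shortest reduction of $G'$ has length at most~$4$; I will suppose for contradiction that it has length exactly~$4$. Since $G'$ was accepted as a canonical child, the expansion carrying $G$ to $G'$ is the canonical (hence shortest) reduction of $G'$ and therefore has length~$4$, so it is of type $L_3$ or $B_{i,j}$ with $i+j=2$.

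Let $p_1,p_2$ be the two expansion pentagons and $P$ the initial patch in~$G$. First I would apply Observation~\ref{obs:3inpatch} with $l=4$: if $P$ contained any $5$-vertex besides $p_1,p_2$, the extended patch in $G'$ would contain two $5$-vertices at distance at most $3$, contradicting the assumed minimum $5$-vertex distance of $4$ in~$G'$. Hence $P$ has exactly the two $5$-vertices $p_1,p_2$, and every boundary face $f_k,g_k$ of the expansion is a hexagon. Next, fix any length-$2$ reduction $\rho$ of $G$ with $5$-vertices $\{u,v\}$. If both $u$ and $v$ lie strictly outside $P$, then every length-$2$ path from $u$ to $v$ in $G$ has its middle vertex on the boundary or in the exterior of $P$, since interior and exterior vertices of a planar patch are non-adjacent; any such path survives verbatim in $G'$ and gives a length-$2$ reduction there, a contradiction. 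Hence each length-$2$ reduction of $G$ has at least one of its $5$-vertices in $\{p_1,p_2\}$.

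Applying this to both $\rho_1$ and $\rho_2$, together with the hypothesis $\{u_1,v_1\}\ne\{u_2,v_2\}$, leaves only a short list of sub-cases according to how $\{u_i,v_i\}$ intersects $\{p_1,p_2\}$ (both sharing the same pentagon, sharing different pentagons, or one of the pairs being $\{p_1,p_2\}$ itself). In every sub-case, whenever a $p_i\in\{p_1,p_2\}$ is paired by a length-$2$ reduction with an exterior $5$-vertex~$v$, the middle vertex $m$ of the length-$2$ path from $p_i$ to $v$ is a boundary vertex of $P$; because $m$ and the edge $vm$ persist in $G'$, the assumed minimum $5$-vertex distance $\ge 4$ in $G'$ forces the inside-patch distance $d_{P'}(m,p_i)\ge 3$. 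The main obstacle, which I expect to be the bulk of the work, is a direct combinatorial verification, for each of the finitely many length-$4$ expansions ($L_3$ and the three $B_{i,j}$ with $i+j=2$), that the two such distance constraints coming from $\rho_1$ and $\rho_2$ cannot be satisfied simultaneously. Following the template of Lemma~\ref{lemma_one_L0}, I expect this check to conclude either by exhibiting an unavoidable reduction of length at most~$3$ already present in $G'$, or by producing a non-trivial $4$- or $5$-edge-cut in the underlying fullerene that is ruled out by the cyclic $5$-edge-connectivity results cited in the proof of Lemma~\ref{lemma_one_L0}.
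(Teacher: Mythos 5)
Your setup matches the paper's proof up to a point: reduce to the IPR case, use Lemma~\ref{lemma_one_L1} to bound the canonical expansion length by $4$, and invoke Observation~\ref{obs:3inpatch} to dispose of the case where the initial patch contains a third $5$-vertex. But from there you take a wrong turn that leaves a genuine gap. You only exploit the case where \emph{both} $5$-vertices of a length-$2$ reduction lie outside the patch (path survives verbatim), conclude that every length-$2$ reduction must use one of $p_1,p_2$, and then launch a case analysis over all length-$4$ expansions ($L_3$, $B_{2,0}$, $B_{1,1}$, $B_{0,2}$) and all intersection patterns, which you explicitly do not carry out and whose resolution you only conjecture (``I expect this check to conclude either by\dots''). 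That unfinished case analysis is the entire content of the lemma in your version, and there is no reason to believe it needs the cyclic-edge-connectivity machinery of Lemma~\ref{lemma_one_L0} at all.

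The missing idea is the much weaker requirement that suffices: you do not need \emph{both} $5$-vertices of a length-$2$ reduction outside the patch, only \emph{one}. Since the patch contains at most two $5$-vertices (namely $p_1,p_2$, once Observation~\ref{obs:3inpatch} has been applied) and the two hypothesised length-$2$ reductions have distinct $5$-vertex sets, their union contains at least three distinct $5$-vertices, so some length-$2$ reduction has a $5$-vertex $v$ not contained in the initial patch. Its length-$2$ path to the other $5$-vertex $u$ then meets the patch in at most one maximal subpath, and exactly as in the proof of Lemma~\ref{lemma_one_L1} the distance from a boundary vertex to a vertex of the patch grows by at most $1$ under the expansion; hence $d_{G'}(v,u)\le 3$ and $G'$ has a reduction of length at most $3$. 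This one-line observation replaces your entire projected case analysis, which is exactly how the paper concludes.
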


\begin{proof}
If $G'$ is not a dual IPR fullerene, the result follows immediately, so assume the opposite. This implies that we 
have to find a bound for the minimum distance of two 5-vertices.
By Lemma~\ref{lemma_one_L1} each child has a reduction of length $4$. So each canonical child was constructed by an 
expansion of length at most~$4$. If there were three 5-vertices in the initial patch of the expansion, the result
follows from Observation~\ref{obs:3inpatch}. So assume this is not the case and one 5-vertex of a
reduction of length 2 is not contained in the initial patch. But then the distance to the other 5-vertex
in the reduction can grow by at most~1, proving the lemma.
\end{proof}

\begin{lemma} \label{lemma_three_L1s}
If a dual fullerene $G$ has at least three reductions of length 2 with pairwise disjoint sets
of 5-vertices of the reduction, all canonical children $G'$ of $G$ have a reduction of length at most~2.
\end{lemma}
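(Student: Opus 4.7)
I would proceed in the spirit of Lemma~\ref{lemma_one_L0} and Lemma~\ref{lemma_two_L1s}, splitting first on whether the canonical child $G'$ is IPR. If $G'$ is not IPR, Lemma~\ref{lemma_reduction_nonipr} immediately gives a reduction of length at most~$2$. So assume $G'$ is IPR; then it suffices to exhibit two $5$-vertices of $G'$ at distance at most~$2$. Since $G$ satisfies the hypothesis of Lemma~\ref{lemma_two_L1s}, the shortest reduction of $G'$ has length at most~$3$, so the canonical expansion $E$ producing $G'$ has length at most~$3$. The only case remaining to rule out is that $E$ has length exactly~$3$, meaning $E$ is of type $L_2$, $B_{0,1}$, or $B_{1,0}$, with a well-defined initial patch $P$.

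Next, I would split on how many $5$-vertices lie in $P$. If $P$ contains a third $5$-vertex besides the two pentagons of~$E$, Observation~\ref{obs:3inpatch} applied with $l = 3$ yields two $5$-vertices of $G'$ at distance at most $3/2 + 1 < 3$, contradicting the assumption that the shortest reduction of $G'$ has length exactly~$3$. Otherwise $P$ contains exactly the two pentagons of $E$ as its $5$-vertices. At this point the hypothesis of the lemma enters: the three length-$2$ reductions of $G$ supply six pairwise distinct $5$-vertices of $G$, so by pigeonhole at least one reduction $r_i$ has both of its $5$-vertex endpoints $p_i$ and $q_i$ lying strictly outside $P$.

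It then remains to show that the length-$2$ path $W_i = (p_i, m_i, q_i)$ realizing $r_i$ persists in $G'$. The key observation is that $m_i$ is adjacent to both $p_i$ and $q_i$, which lie outside $P$, while a strictly interior vertex of $P$ is only adjacent to other vertices of $P$; hence $m_i$ must lie on $\partial P$ or outside~$P$. Since the expansion modifies only the interior of the patch while preserving its boundary together with the incidences between boundary vertices and the exterior, the path $W_i$ is intact in $G'$ and the endpoints $p_i, q_i$ remain $5$-vertices. This produces two $5$-vertices of $G'$ at distance at most~$2$, closing the contradiction. I expect the delicate step to be precisely this path-survival analysis, and in particular the argument that $m_i$ cannot be strictly interior to $P$; both rely on the structural fact that the boundary of the dual patch separates its interior vertices from those outside~$P$.
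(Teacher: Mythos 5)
Your proposal is correct and follows essentially the same route as the paper's proof: reduce to the IPR case, invoke Lemma~\ref{lemma_two_L1s} to bound the canonical expansion length by~$3$, apply Observation~\ref{obs:3inpatch} when the initial patch contains three 5-vertices, and otherwise use the disjointness hypothesis to find a length-$2$ reduction entirely outside the patch whose path survives the expansion. The paper simply asserts this last survival step, whereas you justify it explicitly via the boundary-separation argument; that is a welcome elaboration, not a divergence.
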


\begin{proof}
We may again assume that $G'$ is IPR. It follows from Lemma~\ref{lemma_two_L1s} that $G'$ has a reduction of length at most 3,
so each canonical expansion has length at most $3$. If there are three 5-vertices in the initial patch of the expansion, the result
follows directly from the observation. So there is (at least) one reduction of length 2 so that none of its 5-vertices
is contained in that initial patch. But then the path of length 2 between these 5-vertices still exists in the expanded graph
and allows a reduction of length $2$.
\end{proof}

For two reductions $R_1$ and $R_2$ in a dual fullerene $G$ we define the distance
$d(R_1, R_2)$ to be $\min\{d(a_1,a_2) \mid a_i \mbox{ is a 5-vertex of } R_i\}$.

\begin{lemma} \label{lemma_two_ind_L0s}
If a dual fullerene $G$ has $L_0$ reductions $R_1$ and $R_2$ with $d(R_1, R_2)>4$, all canonical children
$G'$ of $G$ have an $L_0$ reduction.
\end{lemma}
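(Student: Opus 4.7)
The plan is to combine Lemma~\ref{lemma_one_L0} (which bounds the length of any canonical expansion into $G'$) with a locality argument: because $d(R_1,R_2)>4$, the short expansion creating $G'$ is simply too local to destroy both $L_0$ reductions simultaneously, and whichever one survives remains an $L_0$ reduction of $G'$.

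To set things up I would first invoke Lemma~\ref{lemma_one_L0}. Since $G$ has an $L_0$ reduction (either of $R_1,R_2$ will do), every canonical child $G'$ has a reduction of length at most $2$, so the canonical expansion $E$ producing $G'$ is of type $L_0$, $L_1$ or $B_{0,0}$. Let $P\subseteq V(G)$ denote the initial patch of $E$ in the dual. By direct inspection of these three expansion types (Figure~\ref{fig:fullerene_operations-dual}) I would establish the locality claim: every vertex of $P$ lies within dual-distance $1$ of one of the two (future) pentagons of the expansion, so $\mathrm{diam}_G(P)\le 2$, and the expansion changes nothing outside $P$. As a consequence, an $L_0$ reduction $R=\{a,b\}$ of $G$ survives as an $L_0$ reduction of $G'$ whenever the closed $1$-neighbourhood of $\{a,b\}$ in $G$ is disjoint from $P$ (the entire $L_0$ pattern then lies in the unchanged part of the graph).

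With the locality claim in hand the rest is a short distance estimate. Write $R_i=\{a_i,b_i\}$ and suppose for contradiction that both $R_1$ and $R_2$ are destroyed by $E$. Then for each $i\in\{1,2\}$ there is a vertex $u_i\in P$ with $d_G(u_i,\{a_i,b_i\})\le 1$. The triangle inequality combined with the diameter bound gives
\[
d(R_1,R_2)\;\le\;d_G(R_1,u_1)+d_G(u_1,u_2)+d_G(u_2,R_2)\;\le\;1+2+1\;=\;4,
\]
contradicting $d(R_1,R_2)>4$. Hence at least one of $R_1,R_2$ survives in $G'$, and $G'$ has an $L_0$ reduction.

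The main obstacle is the locality claim itself: one must check, case by case for $L_0$, $L_1$ and $B_{0,0}$, both that the initial patch has dual-diameter at most $2$ and that no vertex outside the patch has its incidences modified (so that the full $L_0$ pattern of a ``far'' reduction is genuinely untouched). Once that small piece of bookkeeping is done the distance estimate above closes the proof in a single line.
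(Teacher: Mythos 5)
Your overall strategy is the same as the paper's (bound the expansion length via Lemma~\ref{lemma_one_L0}, then argue the initial patch is too local to kill both $L_0$ reductions), but the key quantitative step is wrong. From ``every vertex of $P$ lies within distance $1$ of one of the two pentagons of the expansion'' you conclude $\mathrm{diam}_G(P)\le 2$; that does not follow, since the two pentagons of an $L_1$ or $B_{0,0}$ expansion are themselves at distance $2$, so the correct conclusion is only $\mathrm{diam}_G(P)\le 1+2+1=4$ (and $\le 3$ for $L_0$). This is also the bound the paper uses: ``the distance between vertices which are in the initial patch of an expansion of length 2 is at most 4.'' With the correct diameter your final estimate becomes $d(R_1,R_2)\le 1+4+1=6$, which does not contradict $d(R_1,R_2)>4$, so the argument does not close.

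The reason the paper still gets the contradiction is that it uses a weaker destruction criterion than yours: a pair $R_i=\{a_i,b_i\}$ fails to survive as an $L_0$ reduction only if one of $a_i,b_i$ actually lies \emph{in} the patch (so that its degree can change), not merely within distance $1$ of it. Vertices outside the patch keep their degrees and adjacencies, and the paper observes that neighbours of the surviving pair being ``either unchanged or changed to 6-vertices'' does not invalidate the $L_0$ reduction at that pair. With that criterion the estimate is $d(R_1,R_2)\le 0+4+0=4$, the desired contradiction. So to repair your proof you must both correct the diameter to $4$ and replace your ``closed $1$-neighbourhood disjoint from $P$'' survival condition by the sharper statement that $\{a_i,b_i\}\cap P=\emptyset$ already suffices for $R_i$ to remain an $L_0$ reduction of $G'$; as written, each of your two estimates silently borrows slack from the other and the total of $4$ is a coincidence rather than a proof.
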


\begin{proof}
It follows from Lemma~\ref{lemma_one_L0} that there is a reduction of
length at most~2 in $G'$. The distance between vertices which are in the initial patch
of an expansion of length 2 is at most
4. Therefore at least one of the two neighbouring 5-vertex pairs still exists
and the neighbouring vertices are either unchanged or changed to 6-vertices. In either case
the reduction will still be possible.
\end{proof}

For dual fullerenes with $152$ vertices, Lemmas~\ref{lemma_one_L1},
\ref{lemma_two_L1s}, \ref{lemma_three_L1s} and \ref{lemma_two_ind_L0s}
can be used to determine a bound on the length of canonical expansions
in 93.9\% of the cases.

%--------------------------------------------------------------------------------------------------

\section{Generation of IPR fullerenes} \label{section:generation_ipr_filter}

The algorithm was developed for generating all fullerenes, but it can
also be used to generate only IPR fullerenes
by using a filter and some simple look-aheads:

An $L_0$ expansion is the only expansion that increases the number
of vertices in a dual fullerene by just $2$ vertices, but the result 
of an $L_0$ expansion is never a dual IPR-Fullerene. When constructing
dual IPR fullerenes with $n$ vertices, dual IPR fullerenes with $n-2$
vertices do not have to be constructed and the largest dual fullerenes 
to which an expansion is applied have $n-3$ vertices.

For a dual fullerene with $n-4$ vertices only expansions of length 3
(i.e. $L_2$ or $B_{1,0}$ expansions) can lead to dual IPR fullerenes
with $n$ vertices. However if a dual fullerene with $n-4$ vertices
contains an $L_0$ reduction, it follows from Lemma~\ref{lemma_one_L0}
that expansions
of length 3 are not canonical.  Thus we can reject
all dual fullerenes with $n-4$
vertices that contain an $L_0$ reduction and also avoid applying
$L_0$ expansions to dual fullerenes with $n-6$ vertices.

Already these simple look-aheads result in an efficient program, as can be seen
in Table~\ref{table:fullerene_times}.

%--------------------------------------------------------------------------------------------------

\section{Testing and results}

The running times and a comparison
with \textit{fullgen} are given in Table~\ref{table:fullerene_times}. Our
generator is called \textit{buckygen}. The program was compiled with
gcc and executed in a single thread
on an Intel Xeon L5520 CPU at 2.27 GHz. The running
times include writing the fullerenes to a null device.

\textit{Buckgen} was used to generate all fullerenes up to 400
vertices.  This led to a programming error being uncovered in
\textit{fullgen} that caused it to miss some fullerenes starting
at 136 vertices and IPR fullerenes starting at 254 vertices.
After correction of the error in \textit{fullgen}, the two programs agree
to at least 380 vertices, which is a good check of both.
We give the counts in 
Tables~\ref{table:degree_counts_1}--\ref{table:degree_counts_5},
which correct those in the article of Brinkmann and Dress~\cite{brinkmann_97} where they overlap.
The fullerenes themselves can be downloaded from 
\url{http://hog.grinvin.org/Fullerenes} for small sizes.

We also repeated and extended a computation reported by Brinkmann et al.~\cite{brinkmann_03}, 
which relied on the faulty version of \textit{fullgen}, the results are listed in Tables~\ref{table:degree_counts_1}--\ref{table:degree_counts_4}.  Now we have confirmed
that all cubic planar graphs with maximum face size 6 are hamiltonian to
at least 316 vertices, in agreement with the famous conjecture of Barnette.

The incomplete lists of fullerenes were also used in another article of Brinkmann et al.~\cite{brinkmann_06}. All reducibility results given there remain true, except for Table~2, where the number of fullerenes that can not be reduced by a growth operation of cost 7 -- that is replacing only 7 edges -- is 1 too small for 186 and 190 vertices and 2 too small for 194 vertices.

\begin{table}
\begin{center}
\begin{tabular}{| c || c | c | c | c |}
\hline 
number of & fullerenes/s & fullgen (s) / & IPR fullerenes/s & fullgen IPR (s) /\\
vertices & (buckygen) & buckygen (s) & (buckygen) & buckygen IPR (s)\\
\hline
100  &  42 358  &  7.30  &  105  &  0.28\\
140  &  33 369  &  7.39  &  789  &  0.46\\
170  &  21 268  &  5.63  &  1 174  &  0.58\\
200  &  16 953  &  5.49  &  1 630  &  0.80\\
230  &  12 597  &  5.13  &  1 721  &  0.96\\
260  &  9 408  &  4.59  &  1 632  &  1.03\\
280  &  7 735  &  4.43  &  1 530  &  1.10\\
300  &  6 494  &  4.07  &  1 425  &  1.16\\
320  &  5 502  &  3.67  &  1 332  &  1.14\\
%\hline
\hline
20--100  &  159 365  &  24.96  &  278  &  0.77\\
102--150  &  157 736  &  33.04  &  4 643  &  2.44\\
152--200  &  115 625  &  32.08  &  10 558  &  4.71\\
202--250  &  82 813  &  32.09  &  13 212  &  6.84\\
\hline
\end{tabular}
\end{center}

\caption{Generation rates for fullerenes.}

\label{table:fullerene_times}
\end{table}

\begin{table}
{\small 
\begin{center}
\begin{tabular}{| c | c | c | c | c | c |}
\hline 
 nv & nf & min.\,face 3 & min.\,face 4 & fullerenes & IPR fullerenes\\
\hline 
4  &  4  &  1  &  0  &  0  &  0\\
6  &  5  &  1  &  0  &  0  &  0\\
8  &  6  &  1  &  1  &  0  &  0\\
10  &  7  &  4  &  1  &  0  &  0\\
12  &  8  &  8  &  2  &  0  &  0\\
14  &  9  &  11  &  4  &  0  &  0\\
16  &  10  &  23  &  7  &  0  &  0\\
18  &  11  &  34  &  10  &  0  &  0\\
20  &  12  &  54  &  22  &  1  &  0\\
22  &  13  &  83  &  32  &  0  &  0\\
24  &  14  &  125  &  58  &  1  &  0\\
26  &  15  &  174  &  92  &  1  &  0\\
28  &  16  &  267  &  151  &  2  &  0\\
30  &  17  &  365  &  227  &  3  &  0\\
32  &  18  &  509  &  368  &  6  &  0\\
34  &  19  &  706  &  530  &  6  &  0\\
36  &  20  &  963  &  805  &  15  &  0\\
38  &  21  &  1 270  &  1 158  &  17  &  0\\
40  &  22  &  1 708  &  1 695  &  40  &  0\\
42  &  23  &  2 204  &  2 373  &  45  &  0\\
44  &  24  &  2 876  &  3 354  &  89  &  0\\
46  &  25  &  3 695  &  4 595  &  116  &  0\\
48  &  26  &  4 708  &  6 340  &  199  &  0\\
50  &  27  &  5 925  &  8 480  &  271  &  0\\
52  &  28  &  7 491  &  11 417  &  437  &  0\\
54  &  29  &  9 255  &  15 049  &  580  &  0\\
56  &  30  &  11 463  &  19 832  &  924  &  0\\
58  &  31  &  14 083  &  25 719  &  1 205  &  0\\
60  &  32  &  17 223  &  33 258  &  1 812  &  1\\
62  &  33  &  20 857  &  42 482  &  2 385  &  0\\
64  &  34  &  25 304  &  54 184  &  3 465  &  0\\
66  &  35  &  30 273  &  68 271  &  4 478  &  0\\
68  &  36  &  36 347  &  85 664  &  6 332  &  0\\
70  &  37  &  43 225  &  106 817  &  8 149  &  1\\
72  &  38  &  51 229  &  132 535  &  11 190  &  1\\
74  &  39  &  60 426  &  163 194  &  14 246  &  1\\
76  &  40  &  71 326  &  200 251  &  19 151  &  2\\
78  &  41  &  83 182  &  244 387  &  24 109  &  5\\
80  &  42  &  97 426  &  296 648  &  31 924  &  7\\
82  &  43  &  113 239  &  358 860  &  39 718  &  9\\
84  &  44  &  131 425  &  431 578  &  51 592  &  24\\
86  &  45  &  151 826  &  517 533  &  63 761  &  19\\
88  &  46  &  175 302  &  617 832  &  81 738  &  35\\
\hline
\end{tabular}
\end{center}
}
\caption{Cubic plane graphs with maximum face size $6$ listed with respect to their minimum face size. Cubic plane graphs with maximum face size $6$ and with minimum face size 5 are fullerenes. nv is the number of vertices and nf is the number of faces.}

\label{table:degree_counts_1}

\end{table}

\begin{table}
{\small 
\begin{center}
\begin{tabular}{| c | c | c | c | c | c |}
\hline 
 nv & nf & min.\,face 3 & min.\,face 4 & fullerenes & IPR fullerenes\\
\hline 
90  &  47  &  200 829  &  735 257  &  99 918  &  46\\
92  &  48  &  231 042  &  870 060  &  126 409  &  86\\
94  &  49  &  263 553  &  1 029 114  &  153 493  &  134\\
96  &  50  &  300 602  &  1 209 783  &  191 839  &  187\\
98  &  51  &  341 960  &  1 420 472  &  231 017  &  259\\
100  &  52  &  388 673  &  1 659 473  &  285 914  &  450\\
102  &  53  &  438 795  &  1 937 509  &  341 658  &  616\\
104  &  54  &  496 961  &  2 249 285  &  419 013  &  823\\
106  &  55  &  559 348  &  2 612 410  &  497 529  &  1 233\\
108  &  56  &  629 807  &  3 015 386  &  604 217  &  1 799\\
110  &  57  &  706 930  &  3 483 289  &  713 319  &  2 355\\
112  &  58  &  792 703  &  4 002 504  &  860 161  &  3 342\\
114  &  59  &  885 137  &  4 600 343  &  1 008 444  &  4 468\\
116  &  60  &  990 929  &  5 257 856  &  1 207 119  &  6 063\\
118  &  61  &  1 102 609  &  6 019 580  &  1 408 553  &  8 148\\
120  &  62  &  1 227 043  &  6 849 385  &  1 674 171  &  10 774\\
122  &  63  &  1 363 825  &  7 805 813  &  1 942 929  &  13 977\\
124  &  64  &  1 513 612  &  8 846 570  &  2 295 721  &  18 769\\
126  &  65  &  1 673 568  &  10 041 875  &  2 650 866  &  23 589\\
128  &  66  &  1 853 928  &  11 335 288  &  3 114 236  &  30 683\\
130  &  67  &  2 045 154  &  12 821 597  &  3 580 637  &  39 393\\
132  &  68  &  2 255 972  &  14 415 241  &  4 182 071  &  49 878\\
134  &  69  &  2 485 363  &  16 248 586  &  4 787 715  &  62 372\\
136  &  70  &  2 732 106  &  18 211 371  &  5 566 949  &  79 362\\
138  &  71  &  2 998 850  &  20 454 114  &  6 344 698  &  98 541\\
140  &  72  &  3 295 090  &  22 845 387  &  7 341 204  &  121 354\\
142  &  73  &  3 606 102  &  25 587 469  &  8 339 033  &  151 201\\
144  &  74  &  3 944 923  &  28 486 985  &  9 604 411  &  186 611\\
146  &  75  &  4 316 999  &  31 808 776  &  10 867 631  &  225 245\\
148  &  76  &  4 711 038  &  35 313 026  &  12 469 092  &  277 930\\
150  &  77  &  5 135 794  &  39 315 258  &  14 059 174  &  335 569\\
152  &  78  &  5 599 065  &  43 529 295  &  16 066 025  &  404 667\\
154  &  79  &  6 091 434  &  48 339 505  &  18 060 979  &  489 646\\
156  &  80  &  6 621 013  &  53 361 979  &  20 558 767  &  586 264\\
158  &  81  &  7 198 926  &  59 117 693  &  23 037 594  &  697 720\\
160  &  82  &  7 800 960  &  65 110 208  &  26 142 839  &  836 497\\
162  &  83  &  8 460 776  &  71 938 170  &  29 202 543  &  989 495\\
164  &  84  &  9 168 333  &  79 041 733  &  33 022 573  &  1 170 157\\
166  &  85  &  9 917 772  &  87 147 815  &  36 798 433  &  1 382 953\\
168  &  86  &  10 711 603  &  95 517 631  &  41 478 344  &  1 628 029\\
170  &  87  &  11 590 680  &  105 090 752  &  46 088 157  &  1 902 265\\
172  &  88  &  12 491 734  &  114 936 807  &  51 809 031  &  2 234 133\\
174  &  89  &  13 479 003  &  126 169 808  &  57 417 264  &  2 601 868\\
176  &  90  &  14 518 882  &  137 732 548  &  64 353 269  &  3 024 383\\
\hline
\end{tabular}
\end{center}
}
\caption{Cubic plane graphs with maximum face size $6$ listed with respect to their minimum face size (continued). nv is the number of vertices and nf is the number of faces.}

\label{table:degree_counts_2}

\end{table}

\begin{table}
{\small 
\begin{center}
\begin{tabular}{| c | c | c | c | c |}
\hline 
 nv & nf & min.\,face 4 & fullerenes & IPR fullerenes\\
\hline 
178  &  91  &  150 895 768  &  71 163 452  &  3 516 365\\
180  &  92  &  164 343 840  &  79 538 751  &  4 071 832\\
182  &  93  &  179 752 024  &  87 738 311  &  4 690 880\\
184  &  94  &  195 420 760  &  97 841 183  &  5 424 777\\
186  &  95  &  213 287 269  &  107 679 717  &  6 229 550\\
188  &  96  &  231 489 614  &  119 761 075  &  7 144 091\\
190  &  97  &  252 233 869  &  131 561 744  &  8 187 581\\
192  &  98  &  273 226 069  &  145 976 674  &  9 364 975\\
194  &  99  &  297 264 792  &  159 999 462  &  10 659 863\\
196  &  100  &  321 450 554  &  177 175 687  &  12 163 298\\
198  &  101  &  349 098 672  &  193 814 658  &  13 809 901\\
200  &  102  &  376 999 869  &  214 127 742  &  15 655 672\\
202  &  103  &  408 774 872  &  233 846 463  &  17 749 388\\
204  &  104  &  440 627 726  &  257 815 889  &  20 070 486\\
206  &  105  &  477 200 827  &  281 006 325  &  22 606 939\\
208  &  106  &  513 632 380  &  309 273 526  &  25 536 557\\
210  &  107  &  555 304 108  &  336 500 830  &  28 700 677\\
212  &  108  &  596 974 072  &  369 580 714  &  32 230 861\\
214  &  109  &  644 526 803  &  401 535 955  &  36 173 081\\
216  &  110  &  691 786 828  &  440 216 206  &  40 536 922\\
218  &  111  &  746 085 995  &  477 420 176  &  45 278 722\\
220  &  112  &  799 648 739  &  522 599 564  &  50 651 799\\
222  &  113  &  861 133 064  &  565 900 181  &  56 463 948\\
224  &  114  &  922 082 216  &  618 309 598  &  62 887 775\\
226  &  115  &  991 650 902  &  668 662 698  &  69 995 887\\
228  &  116  &  1 060 208 550  &  729 414 880  &  77 831 323\\
230  &  117  &  1 139 239 947  &  787 556 069  &  86 238 206\\
232  &  118  &  1 216 496 915  &  857 934 016  &  95 758 929\\
234  &  119  &  1 305 306 936  &  925 042 498  &  105 965 373\\
236  &  120  &  1 392 596 607  &  1 006 016 526  &  117 166 528\\
238  &  121  &  1 492 525 091  &  1 083 451 816  &  129 476 607\\
240  &  122  &  1 590 214 959  &  1 176 632 247  &  142 960 479\\
242  &  123  &  1 702 998 124  &  1 265 323 971  &  157 402 781\\
244  &  124  &  1 812 247 954  &  1 372 440 782  &  173 577 766\\
246  &  125  &  1 938 356 975  &  1 474 111 053  &  190 809 628\\
\hline
\end{tabular}
\end{center}
}
\caption{Triangle-free cubic plane graphs with maximum face size $6$ listed with
  respect to their minimum face size. nv is the number of vertices and nf is the number of faces.}

\label{table:degree_counts_3}

\end{table}

\begin{table}
{\small 
\begin{center}
\begin{tabular}{| c | c | c | c | c |}
\hline 
 nv & nf & min.\,face 4 & fullerenes & IPR fullerenes\\
\hline 
248  &  126  &  2 061 311 003  &  1 596 482 232  &  209 715 141\\
250  &  127  &  2 202 202 308  &  1 712 934 069  &  230 272 559\\
252  &  128  &  2 338 869 735  &  1 852 762 875  &  252 745 513\\
254  &  129  &  2 497 257 527  &  1 985 250 572  &  276 599 787\\
256  &  130  &  2 649 382 974  &  2 144 943 655  &  303 235 792\\
258  &  131  &  2 825 361 014  &  2 295 793 276  &  331 516 984\\
260  &  132  &  2 995 557 818  &  2 477 017 558  &  362 302 637\\
262  &  133  &  3 191 292 821  &  2 648 697 036  &  395 600 325\\
264  &  134  &  3 379 722 482  &  2 854 536 850  &  431 894 257\\
266  &  135  &  3 598 542 661  &  3 048 609 900  &  470 256 444\\
268  &  136  &  3 806 922 124  &  3 282 202 941  &  512 858 451\\
270  &  137  &  4 049 087 424  &  3 501 931 260  &  557 745 670\\
272  &  138  &  4 281 540 754  &  3 765 465 341  &  606 668 511\\
274  &  139  &  4 549 259 510  &  4 014 007 928  &  659 140 287\\
276  &  140  &  4 805 073 991  &  4 311 652 376  &  716 217 922\\
278  &  141  &  5 103 457 703  &  4 591 045 471  &  776 165 188\\
280  &  142  &  5 385 296 261  &  4 926 987 377  &  842 498 881\\
282  &  143  &  5 713 728 893  &  5 241 548 270  &  912 274 540\\
284  &  144  &  6 026 548 238  &  5 618 445 787  &  987 874 095\\
286  &  145  &  6 388 285 729  &  5 972 426 835  &  1 068 507 788\\
288  &  146  &  6 731 485 975  &  6 395 981 131  &  1 156 161 307\\
290  &  147  &  7 132 734 985  &  6 791 769 082  &  1 247 686 189\\
292  &  148  &  7 508 699 038  &  7 267 283 603  &  1 348 832 364\\
294  &  149  &  7 948 994 131  &  7 710 782 991  &  1 454 359 806\\
296  &  150  &  8 365 304 423  &  8 241 719 706  &  1 568 768 524\\
298  &  151  &  8 847 679 520  &  8 738 236 515  &  1 690 214 836\\
300  &  152  &  9 302 042 370  &  9 332 065 811  &  1 821 766 896\\
302  &  153  &  9 835 862 103  &  9 884 604 767  &  1 958 581 588\\
304  &  154  &  10 332 102 625  &  10 548 218 751  &  2 109 271 290\\
306  &  155  &  10 915 020 041  &  11 164 542 762  &  2 266 138 871\\
308  &  156  &  11 462 133 758  &  11 902 015 724  &  2 435 848 971\\
310  &  157  &  12 098 825 145  &  12 588 998 862  &  2 614 544 391\\
312  &  158  &  12 694 519 224  &  13 410 330 482  &  2 808 510 141\\
314  &  159  &  13 396 207 247  &  14 171 344 797  &  3 009 120 113\\
316  &  160  &  14 043 402 497  &  15 085 164 571  &  3 229 731 630\\
\hline
\end{tabular}
\end{center}
}
\caption{Triangle-free cubic plane graphs with maximum face size $6$ listed with
  respect to their minimum face size (continued). nv is the number of vertices and nf is the number of faces.}

\label{table:degree_counts_4}

\end{table}

\begin{table}
{\small 
\begin{center}
\begin{tabular}{| c | c | c | c |}
\hline 
 nv & nf & fullerenes & IPR fullerenes\\
\hline 
318  &  161  &  15 930 619 304  &  3 458 148 016\\
320  &  162  &  16 942 010 457  &  3 704 939 275\\
322  &  163  &  17 880 232 383  &  3 964 153 268\\
324  &  164  &  19 002 055 537  &  4 244 706 701\\
326  &  165  &  20 037 346 408  &  4 533 465 777\\
328  &  166  &  21 280 571 390  &  4 850 870 260\\
330  &  167  &  22 426 253 115  &  5 178 120 469\\
332  &  168  &  23 796 620 378  &  5 531 727 283\\
334  &  169  &  25 063 227 406  &  5 900 369 830\\
336  &  170  &  26 577 912 084  &  6 299 880 577\\
338  &  171  &  27 970 034 826  &  6 709 574 675\\
340  &  172  &  29 642 262 229  &  7 158 963 073\\
342  &  173  &  31 177 474 996  &  7 620 446 934\\
344  &  174  &  33 014 225 318  &  8 118 481 242\\
346  &  175  &  34 705 254 287  &  8 636 262 789\\
348  &  176  &  36 728 266 430  &  9 196 920 285\\
350  &  177  &  38 580 626 759  &  9 768 511 147\\
352  &  178  &  40 806 395 661  &  10 396 040 696\\
354  &  179  &  42 842 199 753  &  11 037 658 075\\
356  &  180  &  45 278 616 586  &  11 730 538 496\\
358  &  181  &  47 513 679 057  &  12 446 446 419\\
360  &  182  &  50 189 039 868  &  13 221 751 502\\
362  &  183  &  52 628 839 448  &  14 010 515 381\\
364  &  184  &  55 562 506 886  &  14 874 753 568\\
366  &  185  &  58 236 270 451  &  15 754 940 959\\
368  &  186  &  61 437 700 788  &  16 705 334 454\\
370  &  187  &  64 363 670 678  &  17 683 643 273\\
372  &  188  &  67 868 149 215  &  18 744 292 915\\
374  &  189  &  71 052 718 441  &  19 816 289 281\\
376  &  190  &  74 884 539 987  &  20 992 425 825\\
378  &  191  &  78 364 039 771  &  22 186 413 139\\
380  &  192  &  82 532 990 559  &  23 475 079 272\\
382  &  193  &  86 329 680 991  &  24 795 898 388\\
384  &  194  &  90 881 152 117  &  26 227 197 453\\
386  &  195  &  95 001 297 565  &  27 670 862 550\\
388  &  196  &  99 963 147 805  &  29 254 036 711\\
390  &  197  &  104 453 597 992  &  30 852 950 986\\
392  &  198  &  109 837 310 021  &  32 581 366 295\\
394  &  199  &  114 722 988 623  &  34 345 173 894\\
396  &  200  &  120 585 261 143  &  36 259 212 641\\
398  &  201  &  125 873 325 588  &  38 179 777 473\\
400  &  202  &  132 247 999 328  &  40 286 153 024\\
\hline
\end{tabular}
\end{center}
}
\caption{Counts of fullerenes and IPR fullerenes. nv is the number of vertices and nf is the number of faces.}

\label{table:degree_counts_5}

\end{table}

Our generator constructs larger fullerenes from smaller ones, so in
order to generate all fullerenes with $n$ vertices, all fullerenes
with at most $n-4$ vertices have to be generated as well (recall that
an $L_0$ expansion increases the number of vertices by 4).
So generating all fullerenes
with at most $n$ vertices gives only a small overhead compared to
generating all fullerenes with exactly $n$ vertices. In \textit{fullgen} the
overhead is considerably bigger as it does not construct fullerenes from
smaller fullerenes.  For example, \textit{buckygen} can generate all fullerenes with
$n \in [290,300]$ vertices more than 15 times faster than \textit{fullgen}. More comparisons with \textit{fullgen} can be found in Table~\ref{table:fullerene_times}.

%--------------------------------------------------------------------------------------------------

\section{Closing remarks}

We have described a new fullerene generator \textit{buckgen} which is
considerably faster than \textit{fullgen}, which is the only previous
generator capable of reaching 100 vertices.
The generation cost is now likely to be lower than that of any 
significant computation performed on the generated structures.

After correction of an error in \textit{fullgen}, we now have two independent
counts of fullerenes up to 380 in full agreement, and values up to 400
vertices from \text{buckgen}.

The latest version of \textit{buckygen} can be downloaded from~\cite{buckygen-site}. \textit{Buckygen} is also part of the \textit{CaGe} software package~\cite{cage}.

\subsection{Acknowledgements}

This work was carried out using the Stevin Supercomputer
Infrastructure at Ghent University.  Jan Goedgebeur is supported by a
PhD grant from the Research Foundation of Flanders (FWO).  Brendan
McKay is supported by the Australian Research Council.

%--------------------------------------------------------------------------------------------------
%\bibliographystyle{unsrt}
%\bibliographystyle{plain} \bibliography{fuller.bib}

\end{document}